\newtheorem{theorem}{Theorem}
\newtheorem{corollary}{Corollary}
\newtheorem{lemma}{Lemma}
\newtheorem{observation}{Observation}
\newtheorem{conjecture}{Conjecture}
\newtheorem{definition}{Definition}
\def\sqrt{\textup{sqrt}}
\def \T{\textup{T}}
\def \diag{\textup{diag~}}
\def \Res{\textup{Res}}
\newcommand{\rmnum}[1]{\romannumeral #1}
\newcommand\restr[2]{{
		\left.\kern-\nulldelimiterspace 
		#1 
		\right|_{#2} 
}}
\newcommand{\Rmnum}[1]{\expandafter\@slowromancap\romannumeral #1@}
\title{Proof of a conjecture on the determinant of walk matrix of rooted product with a  path}
\author{\small Wei Wang$^{{\rm a}}$\quad\quad Zhidan Yan$^{\rm a}$\thanks{Corresponding author: yanzhidan.math@gmail.com}\quad\quad Lihuan Mao$^{\rm b}$
\\
{\footnotesize$^{\rm a}$School of Mathematics, Physics and Finance, Anhui Polytechnic University, Wuhu 241000, China}\\
{\footnotesize$^{\rm b}$School of Mathematics and Data Science, Shaanxi University of Science and Technology, Xi'an 710021, China}
}
\date{}
\begin{document}
 \maketitle

\begin{abstract}
 The walk matrix  of an $n$-vertex graph $G$ with adjacency matrix $A$, denoted by $W(G)$, is $[e,Ae,\ldots,A^{n-1}e]$, where $e$ is the all-ones vector. Let $G\circ P_m$ be the rooted product of $G$ and a rooted path $P_m$ (taking an endvertex as the root), i.e., $G\circ P_m$ is a graph obtained from $G$ and $n$ copies of $P_m$ by identifying each vertex of $G$ with an endvertex of a copy of $P_m$.  Mao-Liu-Wang (2015) and Mao-Wang (2022) proved that, for $m=2$ and $m\in\{3,4\}$, respectively
 $$\det W(G\circ P_m)=\pm a_0^{\lfloor\frac{m}{2}\rfloor}(\det W(G))^m,$$
  where $a_0$ is the constant term of the characteristic polynomial of $G$. Furthermore, Mao-Wang (2022) conjectured that the formula holds for any $m\ge 2$. In this note, we verify this conjecture using the
  technique of Chebyshev polynomials.
 \\

\noindent\textbf{Keywords}: walk matrix; rooted product graph;  generalized spectral characterization

\noindent
\textbf{AMS Classification}: 05C50
\end{abstract}
\section{Introduction}
\label{intro}
Let $G$ be a simple graph with vertex set $\{1,\ldots,n\}$.  The \emph{adjacency matrix} of $G$ is the $n\times n$ symmetric matrix $A=(a_{i,j})$, where $a_{i,j}=1$ if $i$ and $j$ are adjacent;  $a_{i,j}=0$ otherwise.  For a graph $G$, the \emph{walk matrix} of $G$ is
\begin{equation}
W=W(G):=[e,Ae,\ldots,A^{n-1}e],
\end{equation}
where $e$ is the all-ones vector. Note that walk matrices are clearly integral but are usually not symmetric.  Compared to general integral matrices, walk matrices of graphs have some special properties. For example, the determinant of any walk matrix of order $n$ is always a multiple of $2^{\lfloor\frac{n}{2}\rfloor}$. This kind of  matrices has  attracted  increasing attention in recent years as many interesting properties of graphs are closely related to the corresponding walk matrices. A typical result is a theorem of Wang \cite{wang2017JCTB} which says that any graph $G$ with $2^{-\lfloor\frac{n}{2}\rfloor}\det W(G)$ odd and square-free is uniquely determined (up to isomorphism) by its generalized spectrum. Here, the generalized spectrum of a graph $G$ means the spectrum of $G$ together with that of its complement $\overline{G}$.

In \cite{mao2015}, in order to construct graphs that are determined by generalized spectrum, the authors considered the rooted product graph $G\circ P_2$ and proved the following theorem. Throughout this paper, we shall fix a graph $G$ and use $a_0$ to denote the constant term of the characteristic polynomial of $G$.
\begin{theorem}[\cite{mao2015}]
$$\det W(G\circ P_2)=\pm a_0(\det W(G))^2.$$
\end{theorem}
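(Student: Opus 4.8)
The plan is to attack the statement spectrally, exploiting that the adjacency matrix of $G\circ P_2$ has a clean $2\times 2$ block form. Ordering the $2n$ vertices so that the original vertices of $G$ come first and the $n$ pendant vertices last, the adjacency matrix is $\mathcal A=\begin{pmatrix}A&I\\ I&0\end{pmatrix}$ and the all-ones vector is $\mathbf e=\begin{pmatrix}e\\ e\end{pmatrix}$. Since both $A$ and $\mathcal A$ are real symmetric, I would reduce everything to the product formula for walk-matrix determinants: if $A=Q\Lambda Q^{\top}$ with $Q$ orthogonal, $\Lambda=\diag(\lambda_1,\dots,\lambda_n)$, and $b_i$ denotes the $i$-th coordinate of $b=Q^{\top}e$, then $A^ke=Q\Lambda^kb$ gives $W(G)=Q\,\diag(b)\,V$ with $V$ the Vandermonde matrix in the $\lambda_i$, hence $\det W(G)=\pm\big(\prod_i b_i\big)\prod_{i<j}(\lambda_j-\lambda_i)$. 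The same identity applied to $\mathcal A$ expresses $\det W(G\circ P_2)$ through the eigenvalues of $\mathcal A$ and the coordinates of $\mathbf e$ in an orthonormal eigenbasis.

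The next step is to read off the spectral data of $\mathcal A$ from that of $A$. Looking for eigenvectors of the form $\begin{pmatrix}\theta v\\ v\end{pmatrix}$ with $Av=\lambda v$, the eigenvalue equation forces $\theta^2-\lambda\theta-1=0$; thus each $\lambda_i$ contributes two eigenvalues $\theta_i^{\pm}$ with $\theta_i^+\theta_i^-=-1$ and $\theta_i^++\theta_i^-=\lambda_i$. Normalising the eigenvectors (their squared norm is $(\theta_i^\pm)^2+1$) and pairing $\mathbf e$ against them gives the coordinates $\beta_i^\pm=(\theta_i^\pm+1)b_i/\big((\theta_i^\pm)^2+1\big)^{1/2}$; a short check shows these $2n$ eigenvectors are mutually orthonormal. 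Then I would compute $\beta_i^+\beta_i^-$: using $(\theta_i^++1)(\theta_i^-+1)=\theta_i^+\theta_i^-+(\theta_i^++\theta_i^-)+1=\lambda_i$ and $\big((\theta_i^+)^2+1\big)\big((\theta_i^-)^2+1\big)=\lambda_i^2+4$, I get $\beta_i^+\beta_i^-=b_i^2\,\lambda_i/(\lambda_i^2+4)^{1/2}$, so that $\prod_{i}\beta_i^+\beta_i^-=\big(\prod_i b_i\big)^2\big(\prod_i\lambda_i\big)\big/\prod_i(\lambda_i^2+4)^{1/2}$.

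The heart of the argument is the Vandermonde factor $\prod_{j<k}(\theta_k-\theta_j)$ over all $2n$ roots, which I would evaluate through the discriminant of $\Phi(\theta)=\prod_i(\theta^2-\lambda_i\theta-1)$. Using $\Disc(fg)=\Disc(f)\Disc(g)\Res(f,g)^2$ together with $\Disc(\theta^2-\lambda_i\theta-1)=\lambda_i^2+4$ and the key computation $\Res(\theta^2-\lambda_i\theta-1,\ \theta^2-\lambda_{i'}\theta-1)=-(\lambda_i-\lambda_{i'})^2$ (evaluate the second quadratic at the roots of the first, where $\theta^2=\lambda_i\theta+1$), I obtain $\big|\prod_{j<k}(\theta_k-\theta_j)\big|=\prod_i(\lambda_i^2+4)^{1/2}\cdot\prod_{i<i'}(\lambda_i-\lambda_{i'})^2$. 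Multiplying by $\prod_i|\beta_i^+\beta_i^-|$, the nuisance factors $(\lambda_i^2+4)^{1/2}$ cancel and leave $\big(\prod_i b_i\big)^2\,\big(\prod_i|\lambda_i|\big)\,\prod_{i<i'}(\lambda_i-\lambda_{i'})^2$. Recognising $\prod_i|\lambda_i|=|\det A|=|a_0|$ and $\big(\prod_i b_i\big)^2\prod_{i<i'}(\lambda_i-\lambda_{i'})^2=(\det W(G))^2$ then yields the claim up to sign.

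I expect the main obstacle to be precisely this Vandermonde/resultant step: it is where the pairing of the two roots $\theta_i^\pm$ per eigenvalue $\lambda_i$ turns the single Vandermonde of $G$ into its square, and where the $(\lambda_i^2+4)^{1/2}$ factors must cancel exactly against those arising from the $\beta_i^\pm$. A secondary point to handle carefully is degeneracy: the product formula is an algebraic identity, so if $A$ has a repeated eigenvalue (or some $b_i=0$) both sides vanish simultaneously, while if the $\lambda_i$ are distinct then all $2n$ values $\theta_i^\pm$ are distinct (the two roots of each quadratic have opposite signs, and each branch is strictly monotone in $\lambda$). One may therefore either treat the $\lambda_i$ as indeterminates, making the whole computation a polynomial identity, or invoke a Zariski-density argument to justify the spectral computation in full generality.
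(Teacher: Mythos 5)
Your proof is correct, and it is worth comparing with how the paper handles this statement. The paper never reproves the $m=2$ case separately (it cites \cite{mao2015}); instead it is subsumed in the general-$m$ theorem, whose proof follows the same overall spectral strategy as yours: build the eigenpairs of $G\circ P_m$ explicitly from those of $G$, then apply the product formula $\det W=\pm\bigl(\prod_i e_n^\T\xi_i\bigr)\prod_{i<j}(\lambda_j-\lambda_i)$ (the paper's Lemma \ref{basicW}, which is exactly your $W=Q\,\diag(b)\,V$ identity). The differences are in the technical packaging, and they are real. First, the paper keeps the eigenvectors $\eta_i^{(j)}$ unnormalized and pushes all projection factors into a diagonal matrix $D$, so no square roots ever appear; you normalize, which introduces the factors $(\lambda_i^2+4)^{1/2}$ that must later cancel against the discriminant -- a cancellation you correctly identify as the crux. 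Second, you evaluate the $2n$-point Vandermonde via the multiplicativity $\Disc(fg)=\Disc(f)\Disc(g)\Res(f,g)^2$ together with the pairwise resultant $\Res(\theta^2-\lambda_i\theta-1,\theta^2-\lambda_{i'}\theta-1)=-(\lambda_i-\lambda_{i'})^2$, whereas the paper regroups the Vandermonde by hand (Claim 2) and feeds it two Chebyshev resultant identities: $\prod_j S_{m-1}(\mu_i^{(j)})=(-1)^{m(m-1)/2}$ (Corollary \ref{res1restated}, from Dilcher--Stolarsky) and $\prod_j S(\mu_i^{(j)})=(-1)^{m(m-1)/2}(-\lambda_i)^{\lfloor m/2\rfloor}$ (Corollary \ref{res2restated}, from Lemma \ref{newres}). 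For $m=2$ these two identities are precisely your Vieta computations $\theta_i^+\theta_i^-=-1$ and $(1+\theta_i^+)(1+\theta_i^-)=\lambda_i$, so your argument is a genuinely self-contained, elementary instance of the theorem; what it buys is independence from any Chebyshev theory, and what it costs is scalability -- for general $m$ the roots of $S_m(x)-\lambda S_{m-1}(x)$ are no longer governed by quadratic Vieta relations, and the paper's resultant machinery (especially the new Lemma \ref{newres}) is exactly the replacement for your two product identities. One small simplification you could make: the closing worry about degeneracy is unnecessary, since your product formula $\det W=\pm\bigl(\prod_i b_i\bigr)\prod_{i<j}(\lambda_j-\lambda_i)$ is derived without assuming distinct eigenvalues, and your $2n$ constructed eigenvectors are orthonormal purely because $\xi_i^\T\xi_{i'}=\delta_{ii'}$ and $\theta_i^+\theta_i^-+1=0$; so both sides vanish together in every degenerate case and no Zariski-density or indeterminate argument is needed.
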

Recently, Mao and Wang \cite{mao2022} extended the above formula to  $G\circ P_m$ for  $m=3,4$. Precisely, they proved:
\begin{theorem}[\cite{mao2022}]For $m=3$ or $4$,
	$$\det W(G\circ P_m)=\pm a_0^{\lfloor\frac{m}{2}\rfloor}(\det W(G))^m.$$
\end{theorem}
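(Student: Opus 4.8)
The plan is to diagonalize in the ``vertex direction'' and thereby reduce the whole problem to a single weighted path on $m$ vertices, treating first the main case $\det W(G)\ne0$ (equivalently, $A$ has simple spectrum $\mu_1,\dots,\mu_n$). Ordering the $mn$ vertices of $G\circ P_m$ by their level $1,\dots,m$ inside the attached copies of $P_m$, the adjacency matrix becomes the block-tridiagonal matrix $\mathcal A=E_{11}\otimes A+P\otimes I_n$, where $P$ is the adjacency matrix of $P_m$ and $E_{11}$ is the $m\times m$ matrix unit. Since $\mathcal A$ commutes with $I_m\otimes A$, an orthogonal change of basis diagonalizing $A$ followed by a swap of the tensor factors turns $\mathcal A$ into the block-diagonal matrix $\bigoplus_{i=1}^n M_{\mu_i}$, where $M_\mu:=P+\mu E_{11}$ is the adjacency matrix of $P_m$ carrying a loop of weight $\mu$ at the root. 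Under the same change of basis the all-ones vector becomes the stacked vector whose $i$-th block is $c_i e_m$, with $c_i$ the projection of $e$ onto the $\mu_i$-eigenvector. Being orthogonal, the change of basis shows $\det W(G\circ P_m)$ equals, up to sign, the walk-matrix determinant of $\bigoplus_i M_{\mu_i}$ with respect to this stacked vector.

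Next I would factor that determinant. Pulling out the scalars contributes $(\prod_i c_i)^m$, and writing the remaining walk matrix through the eigenvalues of the blocks splits the associated Vandermonde product into a within-block part and a between-block part. The within-block part is exactly $\prod_i D(\mu_i)$, where $D(\mu):=\det W(M_\mu,e_m)$; the between-block part is $\prod_{i<j}$ of the resultants $\Res_x\big(\psi_m(x,\mu_i),\psi_m(x,\mu_j)\big)$, where $\psi_m(x,\mu)=\det(xI-M_\mu)=\phi_m(x)-\mu\phi_{m-1}(x)$ and $\phi_k$ is the characteristic polynomial of $P_k$. Evaluating one resultant at the roots of its first factor gives $(\mu_i-\mu_j)^m\,\Res(\phi_{m-1},\phi_m)$, and here Chebyshev polynomials first enter: $\phi_{m-1},\phi_m$ are consecutive, so $\Res(\phi_{m-1},\phi_m)=\pm1$. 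Hence the between-block part is $\pm\big(\prod_{i<j}(\mu_j-\mu_i)\big)^m$, which together with $(\prod_i c_i)^m$ equals $\pm(\det W(G))^m$ via the standard formula $\det W(G)=\pm(\prod_i c_i)\prod_{i<j}(\mu_j-\mu_i)$.

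This leaves the key lemma, which I expect to be the main obstacle: $D(\mu)=\pm\mu^{\lfloor m/2\rfloor}$. A naive degree count on the walk matrix fails (single permutation terms have $\mu$-degree up to $\sim m^2/4$), so the enormous cancellation must be captured structurally. I would use the identity $D(\mu)^2=\pm\Res_x(\psi_m,W_e)$ with $W_e(x)=e_m^\top\operatorname{adj}(xI-M_\mu)e_m$ the walk polynomial, trading all the Vandermonde and projection bookkeeping for one resultant. A rank-one (Sherman--Morrison) expansion in the loop $\mu E_{11}$ rewrites $W_e$ through the plain path, and evaluating at the roots of $\psi_m$ collapses everything to $\Res(\psi_m,\beta)^2/\Res(\psi_m,\phi_{m-1})$, where $\beta(x)=\sum_{j=0}^{m-1}\phi_j(x)=e_m^\top\operatorname{adj}(xI-P)e_1$ is a cofactor sum. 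Using $\phi_j(2\cos\theta)=\sin((j+1)\theta)/\sin\theta$ and summing the series gives $\beta(2\cos\theta)=\dfrac{\sin(m\theta/2)\,\sin((m+1)\theta/2)}{2\sin^2(\theta/2)\cos(\theta/2)}$, from which the $m-1$ roots of $\beta$ are read off as the even-indexed roots of $\phi_m$ (the $\lfloor m/2\rfloor$ values $2\cos\frac{2l\pi}{m+1}$) together with the even-indexed roots of $\phi_{m-1}$. Feeding these into $\psi_m=\phi_m-\mu\phi_{m-1}$, each $\phi_m$-root yields a factor $-\mu\phi_{m-1}(\cdot)$ and each $\phi_{m-1}$-root a factor $\phi_m(\cdot)$; the Chebyshev evaluations force every numerical factor to be $\pm1$, so $\Res(\psi_m,\beta)=\mu^{\lfloor m/2\rfloor}$ and hence $D(\mu)^2=\mu^{2\lfloor m/2\rfloor}$, giving $D(\mu)=\pm\mu^{\lfloor m/2\rfloor}$.

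Finally I would assemble: $\prod_i D(\mu_i)=\pm(\prod_i\mu_i)^{\lfloor m/2\rfloor}=\pm a_0^{\lfloor m/2\rfloor}$ since $\prod_i\mu_i=(-1)^n a_0$, and combining with the between-block contribution yields $\det W(G\circ P_m)=\pm a_0^{\lfloor m/2\rfloor}(\det W(G))^m$ whenever $A$ has simple spectrum (this already covers $m=3,4$ uniformly and, more importantly, every $m\ge2$). To discharge the hypothesis of distinct eigenvalues and so include the degenerate case $\det W(G)=0$, I would observe that $\det W(G\circ P_m)^2$ and $a_0^{2\lfloor m/2\rfloor}(\det W(G))^{2m}$ are polynomials in the entries of $A$ agreeing on the Zariski-dense set of matrices with simple spectrum, hence identically; recording the sign then recovers the stated formula for all $G$.
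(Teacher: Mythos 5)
Your proposal is correct, and (like the paper itself) it actually proves the full conjecture for every $m\ge 2$, of which the stated $m=3,4$ theorem is a special case; but its organization is genuinely different from the paper's. The paper never block-diagonalizes: it writes down explicit eigenvectors $\eta_i^{(j)}$ of $G\circ P_m$ via the Kronecker structure, obtains the single identity $(W(G\circ P_m))^\T[E^{(1)},\ldots,E^{(m)}]=[M^{(1)},\ldots,M^{(m)}]D$, and evaluates the three determinants separately; since $[E^{(1)},\ldots,E^{(m)}]$ is \emph{always} nonsingular (for each fixed $i$ the $\mu_i^{(j)}$ are simple), repeated eigenvalues of $G$ are handled directly, with no genericity argument. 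You instead diagonalize $A$ first, reduce to the loop-rooted path $M_\mu=P+\mu E_{11}$, isolate the per-block walk determinant $D(\mu)$, and then need a density/polynomial-identity step to reach the degenerate case --- a step the paper avoids, but which is legitimate here because every quantity involved is polynomial in the entries of a real symmetric matrix and nothing in your generic case uses $0/1$ entries. The real point of contact is the key lemma: your claim $D(\mu)=\pm\mu^{\lfloor m/2\rfloor}$, reached through $D(\mu)^2=\pm\Res_x(\psi_m,W_e)$ and Sherman--Morrison, bottoms out in exactly the resultant the paper isolates as Lemma \ref{newres} (Tao's proof): $\Res_x\bigl(U_m+tU_{m-1},\sum_{k=0}^{m-1}U_k\bigr)=(-1)^{m(m-1)/2}t^{\lfloor m/2\rfloor}2^{m(m-1)}$, and your proof of it (telescoping $\sum\sin k\theta$, reading off the roots of $\sum_{k=0}^{m-1}U_k$ as $\cos\frac{2\pi j}{m+1}$ and $\cos\frac{2\pi j}{m}$, then the evaluations $U_{m-1}=-1$ and $U_m=+1$) is the same computation. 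So the two arguments share their one truly new ingredient and differ in the surrounding linear algebra: yours localizes the difficulty in a clean scalar statement about a single weighted path, and the identity $D^2=\pm\Res(\psi_m,W_e)$ neatly kills the sign and eigenvector bookkeeping, while the paper's version stays global but is uniform and needs no degenerate-case patch. One small correction: $\det W(G)\neq 0$ is \emph{not} equivalent to $A$ having simple spectrum (one also needs all projections $e^\T\xi_i\neq 0$); this is harmless, since your main-case argument only uses simplicity of the spectrum and your density step covers everything else, but the parenthetical should be deleted.
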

In the same paper, the authors proposed the following natural conjecture, which (if true) unifies and extends the above two theorems.
\begin{conjecture}[\cite{mao2022}]\label{mainconj}
	For any positive integer $m\ge 2$,
		$$\det W(G\circ P_m)=\pm a_0^{\lfloor\frac{m}{2}\rfloor}(\det W(G))^m.$$
\end{conjecture}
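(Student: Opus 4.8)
The plan is to exploit the tensor structure of $\mathcal{A}:=A(G\circ P_m)$ and reduce the whole statement to a single, purely path-theoretic determinant that Chebyshev polynomials evaluate explicitly. Ordering the $nm$ vertices by their layer in the attached paths, one has $\mathcal{A}=T_m\otimes I_n+E_{11}\otimes A$, where $T_m$ is the adjacency matrix of $P_m$, $E_{11}=\mathrm{diag}(1,0,\dots,0)$, and the all-ones seed is $\hat e=\mathbf 1_m\otimes e$. If $\det W(G)=0$ then $e,Ae,\dots$ span a space of dimension $<n$, every column $\mathcal A^k\hat e$ of $W(G\circ P_m)$ lies in a space of dimension $<nm$, and both sides vanish; so I assume $\det W(G)\neq0$, equivalently that $A$ has simple spectrum and $e$ is cyclic. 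Such $A$ are dense among symmetric matrices, and the target identity is polynomial in the entries of $A$ (with a sign depending only on $m,n$), so it will extend from this dense set by continuity.

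First I would diagonalize the inner factor. Writing $A=S\,\mathrm{diag}(\mu_1,\dots,\mu_n)S^{-1}$ and conjugating by $I_m\otimes S$ turns $\mathcal A$ into $\bigoplus_{i=1}^n M(\mu_i)$ with $M(\mu):=T_m+\mu E_{11}$, and turns the seed into $\bigoplus_i\gamma_i\mathbf 1_m$ where $\gamma=S^{-1}e$; hence $\det W(G\circ P_m)=(\det S)^m\det W\!\big(\bigoplus_i M(\mu_i),\bigoplus_i\gamma_i\mathbf 1_m\big)$. The next step is a product formula for the walk determinant of a block-diagonal matrix with a spread seed, namely $\det W(\bigoplus_i B_i,\bigoplus_i v_i)=\pm\prod_i\det W(B_i,v_i)\cdot\prod_{i<j}\Res(\chi_i,\chi_j)$ with $\chi_i$ the characteristic polynomial of $B_i$; this follows from the eigenvector/Vandermonde factorization of a Krylov matrix, the intra-block differences reassembling the single-block walk determinants and the inter-block differences assembling the resultants. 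Here $\chi_i=f_{\mu_i}$ with $f_\mu(\lambda)=D_m(\lambda)-\mu D_{m-1}(\lambda)$ (apply the matrix-determinant lemma to $\lambda I-T_m-\mu e_1e_1^{\top}$), where $D_k=U_k(\lambda/2)$ is the characteristic polynomial of $P_k$. Evaluating at roots gives $\Res(f_\mu,f_\nu)=\pm(\mu-\nu)^m\,\Res(D_{m-1},D_m)$, and the Chebyshev recurrence $D_k=\lambda D_{k-1}-D_{k-2}$ telescopes $\Res(D_{m-1},D_m)$ down to $\Res(D_1,D_0)=\pm1$. Since $\det W(M(\mu),\gamma\mathbf 1_m)=\gamma^m\det W(M(\mu),\mathbf 1_m)$ and $\det W(G)=\pm\det S\cdot\prod_i\gamma_i\cdot\prod_{i<j}(\mu_i-\mu_j)$, the factors $(\det S)^m$, $\prod_i\gamma_i^m$ and $\prod_{i<j}(\mu_i-\mu_j)^m$ reassemble exactly into $(\det W(G))^m$, leaving
\[
\det W(G\circ P_m)=\pm(\det W(G))^m\prod_{i=1}^n\det W(M(\mu_i),\mathbf 1_m).
\]

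It then remains to prove the key identity $\det W(M(\mu),\mathbf 1_m)=\pm\mu^{\lfloor m/2\rfloor}$; granting it and using $\prod_i\mu_i=(-1)^n a_0$ finishes the proof. This is where the Chebyshev polynomials do the real work. The matrix $M(\mu)$ is a symmetric near-path matrix whose eigenvalues $\lambda=2\cos\theta$ solve $\sin((m+1)\theta)=\mu\sin(m\theta)$ and whose eigenvector for $\lambda_s$ has $j$-th entry $D_{m-j}(\lambda_s)$ (the far, unperturbed end forces the Chebyshev solution, and the perturbed first row reproduces $f_\mu=0$). Because $D_{m-1},\dots,D_0$ is a graded monic basis, the eigenvector matrix equals, up to sign, the Vandermonde matrix in the $\lambda_s$, so the Krylov factorization collapses $\det W(M(\mu),\mathbf 1_m)$ to an explicit product of a discriminant factor $\Disc(f_\mu)$ and the seed-coordinate factors of $\mathbf 1_m$, which I would evaluate in closed form through the $\theta$-parametrization.

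The main obstacle is exactly this last identity. The naive permutation expansion of $\det W(M(\mu),\mathbf 1_m)$ has terms of $\mu$-degree as large as $\lfloor m^2/4\rfloor$, and the content of the statement is that essentially all of this cancels, leaving degree precisely $\lfloor m/2\rfloor$ with unit leading coefficient; no crude degree or positivity bound detects this, as one already sees in the $m=3$ case where the degree-two terms cancel to leave $-\mu$. The Chebyshev eigenstructure is what converts the opaque cancellation into a transparent product, so the crux of the write-up is to carry out that evaluation (equivalently, to compute $\Disc(f_\mu)$ and the product of eigen-coordinates of $\mathbf 1_m$) and to confirm that the resulting leading constant is a unit; the remaining bookkeeping, namely the global sign and the density argument that removes the simple-spectrum assumption, is then routine.
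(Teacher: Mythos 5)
Your reduction is correct, and it is essentially the paper's own argument in different packaging: conjugating by $I_m\otimes S$ and factoring the Krylov matrix of a block-diagonal matrix through eigenvector and Vandermonde matrices is exactly what the paper does in Eq.~\eqref{wemd} and Claims 1--3; your identity $\Res(f_\mu,f_\nu)=\pm(\mu-\nu)^m\Res(D_m,D_{m-1})$ is the computation inside Claim 2; your formula $\det W(G)=\pm\det S\cdot\prod_i\gamma_i\cdot\prod_{i<j}(\mu_i-\mu_j)$ is Lemma \ref{basicW}; and your telescoping evaluation $\Res(D_m,D_{m-1})=\pm1$ is a nice elementary substitute for the Dilcher--Stolarsky result behind Corollary \ref{res1restated}. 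The treatment of the degenerate case $\det W(G)=0$ via the invariant subspace $\mathbb{R}^m\otimes V$ is also fine (and in fact makes your density argument superfluous, since $\det W(G)\neq 0$ already forces simple spectrum and cyclic $e$).

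The gap is that all of the difficulty has been compressed into the ``key identity'' $\det W(M(\mu),\mathbf 1_m)=\pm\mu^{\lfloor m/2\rfloor}$, and you do not prove it; you explicitly defer it as ``the crux of the write-up.'' That identity is precisely Lemma \ref{newres} of the paper (the new resultant evaluation, with Tao's proof), as one sees by carrying out your own sketch: with $u_s$ the eigenvector of $M(\mu)$ having entries $D_{m-j}(\lambda_s)$, the Krylov factorization gives $\det W(M(\mu),\mathbf 1_m)=\pm\det U\cdot\prod_s\frac{u_s^\T\mathbf 1_m}{\|u_s\|^2}\cdot\prod_{s<t}(\lambda_t-\lambda_s)$, and the confluent Christoffel--Darboux identity yields $\|u_s\|^2=D_{m-1}(\lambda_s)\,f_\mu'(\lambda_s)$, so the two Vandermonde factors cancel against $\prod_s f_\mu'(\lambda_s)=\pm\Disc(f_\mu)$ and against $\prod_s D_{m-1}(\lambda_s)=\pm 1$, leaving exactly $\pm\prod_s\sum_{k=0}^{m-1}D_k(\lambda_s)=\pm\Res\bigl(f_\mu,\sum_{k=0}^{m-1}D_k\bigr)$. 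Your plan to evaluate this ``through the $\theta$-parametrization'' of the eigenvalues of $M(\mu)$ cannot be carried out as stated, because the solutions of $\sin(m+1)\theta=\mu\sin m\theta$ are not explicit for general $\mu$, so there is nothing concrete to multiply out. What makes the computation possible --- and is exactly the paper's new contribution --- is to flip the resultant: evaluate $U_m+tU_{m-1}$ at the zeros of $\sum_{k=0}^{m-1}U_k$, which \emph{are} explicit, namely $\cos\frac{2\pi j}{m+1}$ for $1\le j\le\lfloor\frac m2\rfloor$ and $\cos\frac{2\pi j}{m}$ for $1\le j\le\lfloor\frac{m-1}2\rfloor$; the first family consists of zeros of $U_m$, where the factor equals $tU_{m-1}=-t$, and the second of zeros of $U_{m-1}$, where it equals $U_m=+1$, whence the $t^{\lfloor m/2\rfloor}$. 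Without this step (or an equivalent argument), your proposal stops exactly where the actual content of the theorem begins.
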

The main aim of this note is to confirm this conjecture.
\begin{theorem}\label{main}
	Conjecture \ref{mainconj} is true.
\end{theorem}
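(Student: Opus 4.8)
The plan is to peel off the path structure spectrally and reduce everything to a single determinant attached to $P_m$ alone. Order the vertices of $G\circ P_m$ by levels $0,\dots,m-1$, so that its adjacency matrix becomes $B=T_0\otimes I_n+E_{00}\otimes A$, where $T_0=A(P_m)$, $E_{00}$ is the matrix unit at the root, and the all-ones vector is $\mathbf 1_m\otimes e$. The first thing I would record is the eigenvector structure: if $Av=\lambda v$ and $M(\lambda)u=tu$ with $M(\lambda):=T_0+\lambda E_{00}$, then $u\otimes v$ is an eigenvector of $B$ for eigenvalue $t$. Hence the eigenvalues of $B$ are the roots $t_{l,r}$ of $F_{\lambda_l}(t):=p_m(t)-\lambda_l\,p_{m-1}(t)$, one family for each eigenvalue $\lambda_l$ of $A$ (here $p_j$ is the characteristic polynomial of $P_j$, a rescaled Chebyshev polynomial of the second kind), and the overlap of $\mathbf 1_m\otimes e$ with an eigenvector factorizes as $(\mathbf 1_m^{\top}u_{l,r})(e^{\top}v_l)$.

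Working first under the generic assumption that $A$ has simple spectrum and $e$ is cyclic, I would invoke the standard product formula $\det W(G)=\pm\prod_l(e^{\top}v_l)\prod_{l<l'}(\lambda_{l'}-\lambda_l)$ (from $W=V\operatorname{diag}(e^{\top}v_l)\,\mathrm{Vand}$) and its analogue for $B$. Substituting the factorized overlaps and splitting the Vandermonde over the eigenvalues of $B$ into ``same-$\lambda_l$'' and ``cross'' pairs, the path-data per eigenvalue collects into $g(\lambda_l)$, where $g(\lambda):=\det[\mathbf 1_m,M(\lambda)\mathbf 1_m,\dots,M(\lambda)^{m-1}\mathbf 1_m]$, while the cross-terms are resultants $\Res_t(F_{\lambda_{l'}},F_{\lambda_l})$. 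A short Chebyshev computation — using $F_{\lambda'}-F_{\lambda}=(\lambda-\lambda')p_{m-1}$, the three-term recurrence, and $\Res(p_{m-1},p_{m-2})=\pm1$ — evaluates each cross-term to $\pm(\lambda_{l'}-\lambda_l)^m$. Raised over all pairs these supply exactly the $m$-th power of the eigenvalue-Vandermonde of $A$, which combines with the product of the $G$-overlaps to complete $(\det W(G))^m$; what survives is the clean identity $\det W(G\circ P_m)=\pm(\det W(G))^m\prod_l g(\lambda_l)$.

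Everything then hinges on the key lemma $g(\lambda)=\pm\lambda^{\lfloor m/2\rfloor}$; granting it, $\prod_l g(\lambda_l)=\pm\prod_l\lambda_l^{\lfloor m/2\rfloor}=\pm a_0^{\lfloor m/2\rfloor}$ since $\prod_l\lambda_l=\pm a_0$, which is the theorem. I would prove the lemma by separating divisibility from a degree bound. For divisibility, note $g(0)=\det W(P_m)$; since $P_m$ has exactly $\lceil m/2\rceil$ main eigenvalues (the all-ones vector is orthogonal to the even sine-modes), $W(P_m)$ has corank $\lfloor m/2\rfloor$, so $\lambda^{\lfloor m/2\rfloor}\mid g(\lambda)$. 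It remains to show $\deg_\lambda g\le\lfloor m/2\rfloor$, and this is where I expect the real fight: naive degree counting in the Krylov matrix overestimates wildly because the top-degree terms cancel. I would attack it via the identity $g(\lambda)=\pm\Res_t\!\big(F_\lambda,\ \sum_{j=0}^{m-1}P_j\big)$, where $P_j$ are the orthogonal polynomials of the Jacobi matrix $M(\lambda)$ and $\sum_{j=0}^{m-1}P_j=\frac{P_m-P_{m-1}-1+\lambda}{t-2}$ telescopes by the recurrence, reducing the degree question to a resultant of explicit Chebyshev-type polynomials; an alternative is a perturbation analysis at $\lambda=\infty$ tracking the orders to which the overlaps $\mathbf 1_m^{\top}u_{l,r}(\lambda)$ decay. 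Once the degree bound is in hand, $g=c\lambda^{\lfloor m/2\rfloor}$ with $c$ a nonzero constant, and a single leading-coefficient computation gives $c=\pm1$. Finally, since the squared identity is polynomial in the entries of $A$, the generic case extends to all graphs by Zariski density, covering in particular the degenerate case $\det W(G)=0$.
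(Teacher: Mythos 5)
Your reduction is sound and in fact mirrors the paper's own argument: the same decomposition $A(G\circ P_m)=A(P_m)\otimes I_n+D_1\otimes A(G)$, the same eigenvector structure, and the same Vandermonde/resultant bookkeeping in which the cross-terms $\Res_t(F_{\lambda_{l'}},F_{\lambda_l})=\pm(\lambda_{l'}-\lambda_l)^m$ assemble $(\det W(G))^m$, while the per-eigenvalue path data collapses to a single quantity, your $g(\lambda)$, which equals (up to sign) $\Res_t\bigl(S_m(t)-\lambda S_{m-1}(t),\,\sum_{k=0}^{m-1}S_k(t)\bigr)$. The Zariski-density patch for degenerate spectra is also legitimate (the paper instead avoids genericity by never dividing by anything that can vanish). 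But there is a genuine hole exactly where you say ``the real fight'' is: the key lemma $g(\lambda)=\pm\lambda^{\lfloor m/2\rfloor}$ is assumed, not proven. Your divisibility half is correct and even elegant --- $P_m$ has $\lceil m/2\rceil$ main eigenvalues, so $W(P_m)=g$ evaluated at $\lambda=0$ has corank $\lfloor m/2\rfloor$, and a polynomial matrix whose value at $0$ has corank $r$ has determinant divisible by $\lambda^r$ --- but the degree bound $\deg_\lambda g\le\lfloor m/2\rfloor$ is left as a pointer to ``a resultant of explicit Chebyshev-type polynomials,'' which is merely a restatement of what must be proved. The resultant formula by itself only gives $\deg_\lambda g\le m-1$, since each of the $m-1$ factors $S_m(\beta_j)-\lambda S_{m-1}(\beta_j)$ is linear in $\lambda$; and in your variant with $\sum_j P_j$ (where $P_j=S_j-\lambda S_{j-1}$), the roots $\beta_j$ themselves move with $\lambda$, which makes the count harder, not easier.

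The missing content is precisely the paper's new result, Lemma~\ref{newres} (proved by Tao): the zeros of $\sum_{k=0}^{m-1}U_k(x)$ can be located exactly via the factorization $\sum_{k=0}^{m-1}U_k(\cos\theta)=\frac{\sin\frac{(m+1)\theta}{2}\,\sin\frac{m\theta}{2}}{2\cos\frac{\theta}{2}\sin^2\frac{\theta}{2}}$, and they split into $\lfloor m/2\rfloor$ zeros of $U_m$ (the points $\cos\frac{2\pi j}{m+1}$) together with $\lfloor (m-1)/2\rfloor$ zeros of $U_{m-1}$ (the points $\cos\frac{2\pi j}{m}$); at the former the linear factor reduces to $\pm\lambda$ (since $U_{m-1}=-1$ there), and at the latter to $\pm1$ (since $U_m=+1$ there). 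It is this splitting that collapses the naive degree $m-1$ to $\lfloor m/2\rfloor$ and pins the constant to $\pm1$ in one stroke. Without it (or your alternative $\lambda=\infty$ perturbation analysis actually carried out), the argument is incomplete: everything up to the key lemma is a routine extension of the $m=3,4$ cases already in Mao--Wang, and the lemma is the actual mathematical contribution of the paper.
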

The overall strategy of the proof is the same as in \cite{mao2022} which is based on explicit computations of eigenvalues and eigenvectors. Indeed, most derivations in \cite{mao2022} for $m=3,4$  can be easily extended to any positive $m$ except some resultant-related computations.    A new finding of this note is that most computations involved are closely related to Chebyshev polynomials of the second kind. A crucial step is a newly established equality concerning the resultant of two special linear combinations of Chebyshev polynomials (Lemma \ref{newres}).
\section{Eigenvalues and eigenvectors of $G\circ P_m$}
We always regard the endvertex of $P_m$ as its root vertex. For an $n$-vertex labelled graph $G$, the \emph{rooted product graph} $G\circ P_m$, is a graph obtained from $G$ and $n$ copies of $P_m$ by identifying the  root vertex of the $i$-th copy of $P_m$ with the $i$-th vertex of $G$ for $i=1,2,\ldots,n$, see Fig.~1 for an illustration. This is a special case of rooted product graphs $G\circ H$ introduced by Godsil-McKay \cite{godsil1978} and Schwenk \cite{schwenk1974}.
\begin{figure}
	\centering
	\includegraphics[height=6.6cm]{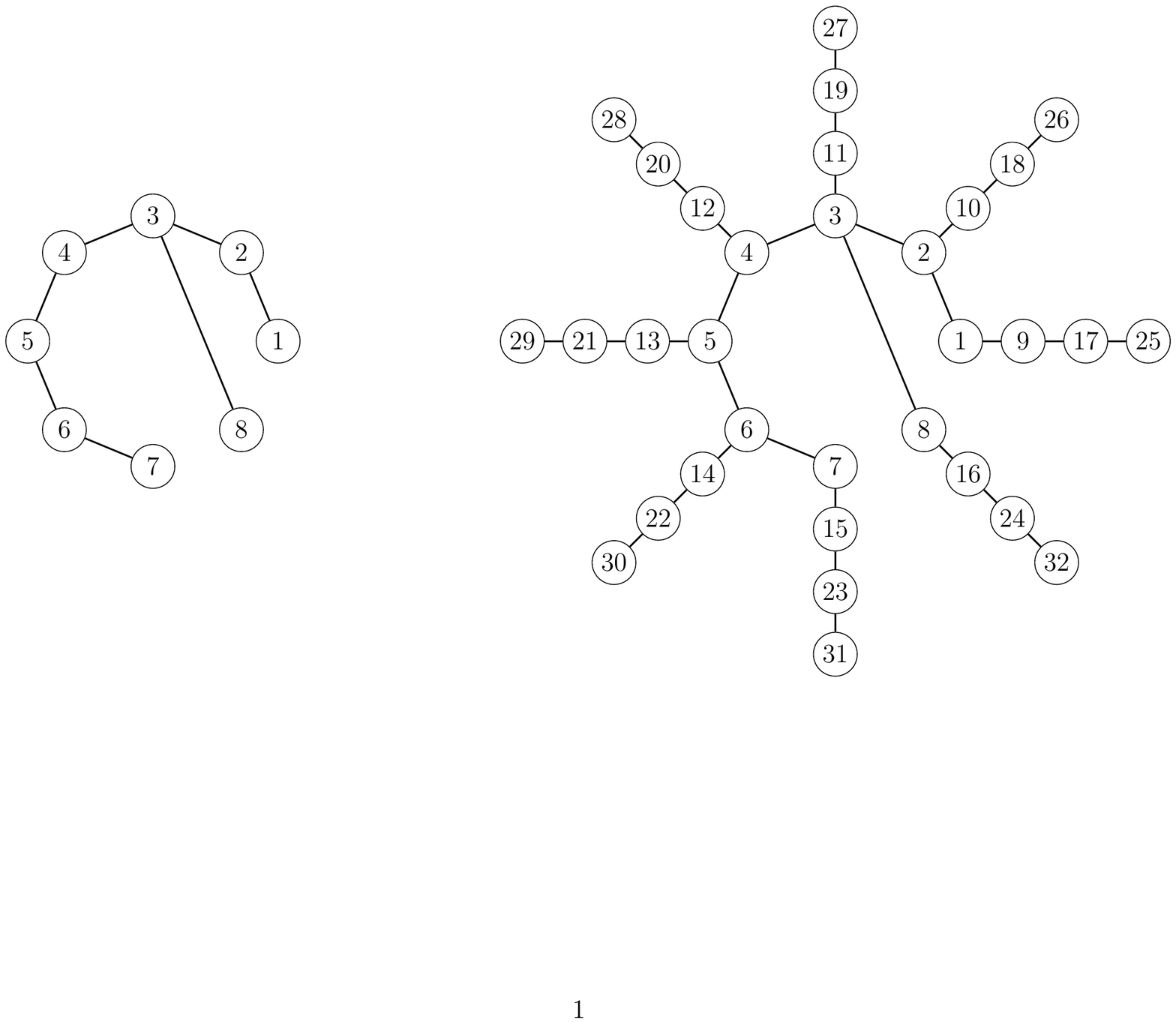}
	\caption{Graph $G$ (left) and the rooted product graph $G\circ P_4$ (right).}
\end{figure}
\begin{definition}\normalfont{
	Let $A=(a_{ij})$ be an $m\times n$ matrix and $B$  a $ p \times  q$ matrix. The \emph{Kronecker product} $A\otimes B$ is the $pm \times qn$ block matrix:
	$$A\otimes B=\begin{bmatrix}
	a_{11}B&\cdots&a_{1n}B\\
	\vdots&\ddots&\vdots\\
	a_{m1}B&\cdots&a_{mn}B
	\end{bmatrix}.
	$$
}
\end{definition}
By appropriately labelling the vertices in $G\circ P_m$, the adjacency matrix $A(G\circ P_m)$ has a nice structure.
\begin{observation}\label{adjGP}
	$A(G\circ P_m)=A(P_m)\otimes I_n+D_1\otimes A(G),$
	where $I_n$ is the identity matrix of order $n$ and $D_1$ is the diagonal matrix $\diag(1,0,\ldots,0)$ of order $m$.
	\end{observation}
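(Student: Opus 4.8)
The plan is to prove the identity by fixing an explicit labelling of the $mn$ vertices of $G\circ P_m$ and then comparing the two sides entrywise. I would write each vertex of $G\circ P_m$ as a pair $(k,i)$ with $k\in\{1,\ldots,m\}$ and $i\in\{1,\ldots,n\}$, where $(k,i)$ denotes the $k$-th vertex (counting from the root) along the copy of $P_m$ attached to the $i$-th vertex of $G$; in particular $(1,i)$ is the root of the $i$-th copy, which is identified with vertex $i$ of $G$. I would order these vertices so that $(k,i)$ occupies position $(k-1)n+i$, i.e.\ the vertices are grouped into $m$ consecutive blocks of size $n$ indexed by the ``level'' $k$, with the vertices inside each block ordered as in $G$. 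This is precisely the ordering for which an $m\times m$ matrix Kronecker-multiplied by an $n\times n$ matrix acts on the level index through the former factor and on the $G$-index through the latter.

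With this labelling, the edges of $G\circ P_m$ split into exactly two families. First, within each copy of $P_m$ the vertices $(k,i)$ and $(k+1,i)$ are adjacent for $k=1,\ldots,m-1$, and these are the only edges arising from the paths. Second, because the roots $(1,i)$ are identified with the vertices of $G$, two root vertices $(1,i)$ and $(1,j)$ are adjacent exactly when $i$ and $j$ are adjacent in $G$; no edge of $G\circ P_m$ joins two vertices of different copies except such root-to-root edges at level $1$. The point to record is that these two families are disjoint: a path edge always stays inside a single copy (same $i$), whereas a $G$-edge joins two distinct copies at level $1$, so the two contributions never collide and may simply be summed.

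It then remains to match each family to one Kronecker term. For the first family, the $\bigl((k,i),(k',j)\bigr)$ entry of $A(P_m)\otimes I_n$ equals $A(P_m)_{k,k'}(I_n)_{i,j}$, which is $1$ exactly when $i=j$ and $|k-k'|=1$; this is precisely the set of path edges. For the second family, the corresponding entry of $D_1\otimes A(G)$ equals $(D_1)_{k,k'}A(G)_{i,j}$, and since $D_1=\diag(1,0,\ldots,0)$ this is nonzero only when $k=k'=1$, in which case it equals $A(G)_{i,j}$; this is precisely the set of $G$-edges among the roots. Adding the two matrices therefore reproduces $A(G\circ P_m)$ entry by entry, which establishes the claim.

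There is no genuine analytic obstacle here, the statement being a purely structural identity; the only real care needed is bookkeeping. Specifically, one must commit to a single ordering convention for the Kronecker product and verify that the root identification is accounted for exactly once, so that the lone term $D_1\otimes A(G)$ (rather than, say, $I_m\otimes A(G)$) captures all and only the $G$-edges, while every path edge — including the root-to-second-vertex edge of each copy — is supplied entirely by $A(P_m)\otimes I_n$.
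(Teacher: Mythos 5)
Your proof is correct and is precisely the routine verification the paper leaves implicit: the paper states this as an Observation with no proof at all, appealing only to ``appropriately labelling the vertices,'' and the labelling it has in mind is exactly your level-by-level block ordering $(k,i)\mapsto (k-1)n+i$ (consistent with how the eigenvector blocks $(S_{m-1},\ldots,S_0)^\T\otimes\xi_i$ are used later in Lemma \ref{eigA}). Your entrywise check --- that $A(P_m)\otimes I_n$ contributes exactly the within-copy path edges and $D_1\otimes A(G)$ exactly the root-level $G$-edges, with the two families disjoint --- supplies the bookkeeping correctly and completely.
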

For a graph $G$, we use $\phi(G;x)$ to denote the characteristic polynomial of $G$, i.e., $\phi(G;x)=\det(xI-A(G))$. The roots of $\phi(G;x)=0$ are called the eigenvalues (spectrum) of $G$.  The following lemma is a special case of decomposition of $\phi(G\circ H,x)$  derived by Schwenk \cite{schwenk1974} (see also \cite{godsil1978,gutman1980}).
\begin{lemma}
	$\phi(G\circ P_m;x)=(\phi(P_{m-1};x))^n\phi\left(G;\frac{\phi(P_m;x)}{\phi(P_{m-1};x)}\right)=\prod\limits_{i=1}^n(\phi(P_m;x)-\lambda_i\phi(P_{m-1};x)),$
	where $\lambda_1,\ldots,\lambda_n$ are eigenvalues of $G$.
\end{lemma}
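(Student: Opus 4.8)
The plan is to work directly from the Kronecker-product description of $A(G\circ P_m)$ supplied by Observation~\ref{adjGP}, rather than quoting Schwenk's general decomposition. Since $A(G)$ is real symmetric, I would first choose an orthogonal matrix $Q$ with $Q^{\T}A(G)Q=\diag(\lambda_1,\ldots,\lambda_n)=:\Lambda$ and conjugate $A(G\circ P_m)$ by $I_m\otimes Q$. By the mixed-product rule for Kronecker products the summand $A(P_m)\otimes I_n$ is fixed, while $D_1\otimes A(G)$ becomes $D_1\otimes\Lambda$; thus $A(G\circ P_m)$ is orthogonally similar to $A(P_m)\otimes I_n+D_1\otimes\Lambda$, and in particular has the same characteristic polynomial.

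Second, I would untangle the two Kronecker products into a genuinely block-diagonal matrix. A perfect-shuffle permutation $P$ swaps the two tensor factors simultaneously, so that
\begin{equation*}
P^{\T}\bigl(A(P_m)\otimes I_n+D_1\otimes\Lambda\bigr)P=I_n\otimes A(P_m)+\Lambda\otimes D_1.
\end{equation*}
Because $\Lambda$ is diagonal, the right-hand side is block diagonal with $n$ blocks, the $i$-th being the $m\times m$ matrix $A(P_m)+\lambda_i D_1$. Taking the determinant of $xI_{mn}$ minus this matrix gives
\begin{equation*}
\phi(G\circ P_m;x)=\prod_{i=1}^{n}\det\bigl(xI_m-A(P_m)-\lambda_i D_1\bigr).
\end{equation*}

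Third, it remains only to evaluate each $m\times m$ determinant. Setting $M:=xI_m-A(P_m)$, the standard tridiagonal matrix with $\det M=\phi(P_m;x)$, subtracting $\lambda_i D_1$ perturbs solely the $(1,1)$ entry; multilinearity of the determinant in the first row therefore gives $\det(M-\lambda_i D_1)=\det M-\lambda_i C_{11}$, where $C_{11}$ is the $(1,1)$ cofactor. Deleting the first row and column of $M$ leaves precisely the tridiagonal matrix $xI_{m-1}-A(P_{m-1})$, so $C_{11}=\phi(P_{m-1};x)$ and hence $\det(M-\lambda_i D_1)=\phi(P_m;x)-\lambda_i\phi(P_{m-1};x)$. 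This yields the product formula on the right of the claim; pulling a factor $\phi(P_{m-1};x)$ out of each term and recognising $\prod_{i=1}^{n}(y-\lambda_i)=\phi(G;y)$ at $y=\phi(P_m;x)/\phi(P_{m-1};x)$ delivers the first equality.

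The computation is essentially routine, and the only genuinely delicate point I anticipate is the bookkeeping for the perfect-shuffle permutation and the verification that the diagonal blocks come out as $A(P_m)+\lambda_i D_1$. This can in fact be sidestepped: if one labels $G\circ P_m$ so that the $m$ path-vertices sitting over each vertex of $G$ are consecutive, then the adjacency matrix takes the form $I_n\otimes A(P_m)+A(G)\otimes D_1$, which conjugation by $Q\otimes I_m$ turns directly into $I_n\otimes A(P_m)+\Lambda\otimes D_1$ with no shuffle required.
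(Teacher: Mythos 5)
Your proof is correct, but it takes a genuinely different route from the paper: the paper gives no proof of this lemma at all, citing it instead as a special case of Schwenk's general decomposition of $\phi(G\circ H;x)$ for an arbitrary rooted graph $H$ (with pointers to Godsil--McKay and Gutman). Your argument is a self-contained, elementary derivation tailored to $H=P_m$: diagonalize $A(G)$ orthogonally, pass from $A(P_m)\otimes I_n+D_1\otimes\Lambda$ to the block-diagonal matrix with blocks $A(P_m)+\lambda_i D_1$ via the commutation (perfect-shuffle) permutation --- or, as you note, avoid the shuffle entirely by the labelling that yields $I_n\otimes A(P_m)+A(G)\otimes D_1$ --- and then evaluate $\det\bigl(xI_m-A(P_m)-\lambda_i D_1\bigr)=\phi(P_m;x)-\lambda_i\phi(P_{m-1};x)$ by multilinearity of the determinant in the first row. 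All three steps are sound; the two facts you use implicitly are both true and worth stating: the same shuffle permutation intertwines both Kronecker summands because the commutation matrix depends only on the dimensions $(m,n)$ and not on the matrices involved, and the $(1,1)$ cofactor of $xI_m-A(P_m)$ equals $\phi(P_{m-1};x)$ precisely because the root (index $1$, where $D_1$ has its nonzero entry) is an \emph{endvertex} of the path, so deleting its row and column leaves $xI_{m-1}-A(P_{m-1})$. What your approach buys is self-containedness and transparency: it makes the note independent of Schwenk's paper and flows naturally from the Kronecker-product description in Observation~\ref{adjGP}, which the paper already records. What the citation buys is brevity and generality, since Schwenk's formula applies to rooted products $G\circ H$ for arbitrary rooted graphs $H$, whereas your cofactor computation exploits the specific tridiagonal structure of the path.
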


Let $U_n(x)$ be the $n$-th Chebyshev polynomial of the second kind, defined by
$$U_n(\cos\theta)=\frac{\sin(n+1)\theta}{\sin\theta}.$$
It is known that $U_n(x)$'s satisfy the three-term recurrence relations:
$U_{n+1}(x)=2xU_n(x)-U_{n-1}(x)$ and the initial conditions:  $U_0(x)=1$ and $U_1(x)=2x$. Define $S_n(x)=U_n(x/2)$. Then $S_n(x)$ is a monic polynomial with integral coefficients and is referred to as the \emph{renormalized} Chebyshev polynomial \cite{rivlin1990}. It is well known that $\phi(P_m;x)=S_m(x)$.
\begin{definition}\label{eigmu}\normalfont{
Let $\lambda_1,\ldots,\lambda_n$ denote the eigenvalues of $G$ and $\xi_1,\ldots,\xi_n$ be the corresponding normalized eigenvector. 	We use $\mu_i^{(j)}(j\in\{1,2,\ldots,m\})$ to denote all zeroes of $S_m(x)-\lambda_iS_{m-1}(x)$ for  $i\in\{1,2,\ldots,n\}$ and write $$\eta_i^{(j)}=\frac{1}{S_{m-1}(\mu_i^{(j)})}\begin{bmatrix}
S_{m-1}(\mu_i^{(j)})\\
S_{m-2}(\mu_i^{(j)})\\
\vdots\\
S_0(\mu_i^{(j)})
\end{bmatrix}\otimes \xi_i.$$
}
\end{definition}
It should be pointed out that $S_{m-1}(\mu_i^{(j)})$ is never zero, see Corollary \ref{res1restated} in Sect.~\ref{sectres}.
\begin{lemma}\label{distmu}
For any $i\in\{1,2,\ldots,n\}$, all roots of $S_m(x)-\lambda_iS_{m-1}(x)$ are simple, i.e.,  $\mu_i^{(j_1)}\neq \mu_i^{(j_2)}$ for any distinct $j_1$ and $j_2$ in $\{1,2,\ldots,m\}$.
\end{lemma}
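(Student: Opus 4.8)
The plan is to recognise the polynomial $S_m(x)-\lambda_i S_{m-1}(x)$ as the characteristic polynomial of a real symmetric tridiagonal (Jacobi) matrix whose off-diagonal entries are all nonzero, and then to invoke the classical fact that such matrices have simple spectrum. Concretely, I would introduce the $m\times m$ matrix
$$M_i:=A(P_m)+\lambda_i D_1,$$
where $D_1=\diag(1,0,\ldots,0)$ is the matrix from Observation \ref{adjGP}. This is exactly the path adjacency matrix $A(P_m)$ with the single entry $\lambda_i$ inserted in the top-left corner; it is real symmetric (note $\lambda_i\in\mathbb{R}$, since $A(G)$ is symmetric) and its sub- and super-diagonal entries all equal $1$, hence are nonzero.

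Next I would compute $\det(xI-M_i)$ by cofactor expansion along the first row. The two trailing principal blocks are $xI-A(P_{m-1})$ and $xI-A(P_{m-2})$, with determinants $S_{m-1}(x)$ and $S_{m-2}(x)$ respectively (using $\phi(P_k;x)=S_k(x)$), so that
$$\det(xI-M_i)=(x-\lambda_i)S_{m-1}(x)-S_{m-2}(x).$$
The three-term recurrence $S_m(x)=xS_{m-1}(x)-S_{m-2}(x)$ then collapses the right-hand side to $S_m(x)-\lambda_i S_{m-1}(x)$. Thus the roots of $S_m(x)-\lambda_i S_{m-1}(x)$, counted with multiplicity, are precisely the eigenvalues of $M_i$, and it suffices to prove that these eigenvalues are simple.

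Finally I would bound the geometric multiplicity of each eigenvalue. If $v=(v_1,\ldots,v_m)^\top$ satisfies $M_i v=\mu v$, then the tridiagonal structure turns the eigenvector equations into $v_2=(\mu-\lambda_i)v_1$ together with $v_{k+1}=\mu v_k-v_{k-1}$ for $2\le k\le m-1$, so the whole vector $v$ is determined by $v_1$ alone; in particular $v_1=0$ forces $v=0$. Hence every eigenspace of $M_i$ is one-dimensional. Because $M_i$ is real symmetric, its algebraic and geometric multiplicities coincide, so each eigenvalue has algebraic multiplicity $1$; equivalently, the $m$ roots $\mu_i^{(1)},\ldots,\mu_i^{(m)}$ of $S_m(x)-\lambda_i S_{m-1}(x)$ are pairwise distinct, as claimed.

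The steps here are all routine, and I do not expect a genuine obstacle; the one point demanding a little care is the determinant identity, where the two trailing blocks must be correctly identified as $xI-A(P_{m-1})$ and $xI-A(P_{m-2})$ so that the recurrence applies cleanly. I would also remark that the same circle of ideas supplies the fact quoted after Definition \ref{eigmu}: evaluating the Chebyshev identity $S_{m-1}(x)^2-S_m(x)S_{m-2}(x)=1$ at a root $\mu$ of $S_m-\lambda_i S_{m-1}$ gives $S_{m-1}(\mu)\bigl(S_{m-1}(\mu)-\lambda_i S_{m-2}(\mu)\bigr)=1$, which forces $S_{m-1}(\mu)\neq 0$.
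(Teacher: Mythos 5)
Your proof is correct, but it takes a genuinely different route from the paper's. The paper argues analytically: the roots of $S_{m-1}$ interlace those of $S_m$, so $f(x)=S_m(x)-\lambda_i S_{m-1}(x)$ agrees in sign with $S_m$ at the points $b_k=2\cos\frac{k\pi}{m}$ and at $\pm\infty$; these signs alternate, and the intermediate value theorem places one root of $f$ in each of $m$ disjoint intervals, forcing all $m$ roots to be distinct. You argue algebraically: $S_m(x)-\lambda_i S_{m-1}(x)$ is the characteristic polynomial of the Jacobi matrix $M_i=A(P_m)+\lambda_i D_1$ (your cofactor expansion and the use of the recurrence $S_m(x)=xS_{m-1}(x)-S_{m-2}(x)$ are both correct), and a real symmetric tridiagonal matrix with nonzero off-diagonal entries has simple spectrum, since each eigenvector is determined by its first coordinate and symmetry makes algebraic and geometric multiplicities coincide. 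Both proofs yield realness of the $\mu_i^{(j)}$ as a by-product. What your approach buys: it avoids the explicit trigonometric description of the Chebyshev roots, and it fits naturally into the paper's framework, since $M_i$ is exactly the block obtained when $A(G\circ P_m)=A(P_m)\otimes I_n+D_1\otimes A(G)$ (Observation \ref{adjGP}) is block-diagonalized along the eigenbasis of $G$; moreover, your closing remark, via the identity $S_{m-1}^2-S_mS_{m-2}=1$, gives an elementary independent proof that $S_{m-1}(\mu_i^{(j)})\neq 0$, a fact the paper instead deduces from the Dilcher--Stolarsky resultant (Corollary \ref{res1restated}). What the paper's approach buys: the interlacing argument additionally localizes each $\mu_i^{(j)}$ strictly between consecutive roots of $S_{m-1}$, and it requires nothing beyond the intermediate value theorem.
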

\begin{proof}
	The roots of the renormalized Chebyshev polynomials $S_m(x)$ and $S_{m-1}(x)$ are $\{a_k=2\cos\frac{k\pi}{m+1}\colon\,1\le k\le m\}$ and $\{b_k=2\cos\frac{k\pi}{m}\colon\,1\le k\le m-1\}$. Note that $a_1>b_1>a_2>b_2>\cdots>a_{m-1}>b_{m-1}>a_m$.
Moreover, as all roots $a_k$'s are clearly \emph{simple}, we find that  the sequence $S_m(+\infty), S_m(b_1),S_m(b_2),\ldots,S_m(b_{m-1}),S_m(-\infty)$ must have alternating signs. Write $f(x)=S_m(x)-\lambda_iS_{m-1}(x)$. Since $S_{m-1}(b_k)=0$ for $k=1,2,\ldots,m-1$ and  $m-1=\deg S_{m-1}(x)<\deg S_m(x)=m$, the signs of $S_m(x)$ and $f(x)$ are the same for each $x\in\{b_1,\ldots,b_{m-1}\}\cup\{+\infty,-\infty\}$. This means the sequence $f(+\infty)$, $f(b_1)$, $f(b_2)$, $\ldots$, $f(b_{m-1})$, $f(-\infty)$ also have alternating signs. By intermediate zero theorem for continuous functions, we see that $f(x)$ has at least one root in each of the $m$ intervals: $(-\infty,b_{m-1})$,  $(b_{m-1},b_{m-2})$, $\ldots,$ $(b_2,b_1)$ and $(b_1,+\infty)$. Since $f(x)$ is a polynomial of degree $m$, this means all roots of $f(x)$ are simple, as desired.
\end{proof}
\begin{lemma}\label{eigA}
	Let $\tilde{A}$ denote the adjacency matrix of $G\circ P_m$. Then $\tilde{A}\eta_i^{(j)}=\mu_i^{(j)}\eta_i^{(j)}$ for $i\in\{1,2,\ldots,n\}$ and $j\in\{1,2,\ldots,m\}$.
\end{lemma}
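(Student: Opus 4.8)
The plan is to verify the claimed eigenvalue equation by a direct computation that exploits the tensor structure of $\tilde A$ recorded in Observation \ref{adjGP} together with the three-term recurrence for the renormalized Chebyshev polynomials. Since rescaling an eigenvector does not affect the eigenvalue relation, I would first discard the normalizing constant $1/S_{m-1}(\mu_i^{(j)})$ (which is legitimate because this quantity is nonzero) and work with the unnormalized vector $\eta_i^{(j)}=v\otimes\xi_i$, where $\mu=\mu_i^{(j)}$ and $v=(S_{m-1}(\mu),S_{m-2}(\mu),\ldots,S_0(\mu))^{\top}$. Using $\tilde A=A(P_m)\otimes I_n+D_1\otimes A(G)$, the mixed-product property $(X\otimes Y)(u\otimes w)=Xu\otimes Yw$, and $A(G)\xi_i=\lambda_i\xi_i$, the action collapses to
$$\tilde A\,(v\otimes\xi_i)=\bigl(A(P_m)v+\lambda_i D_1 v\bigr)\otimes\xi_i.$$
Thus the lemma reduces to the purely one-dimensional identity $A(P_m)v+\lambda_i D_1 v=\mu v$ in $\mathbb R^m$, and the eigenvector $\xi_i$ of $G$ plays no further role.

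Next I would expand this identity componentwise. Writing $v_k=S_{m-k}(\mu)$ and using that $A(P_m)$ is the tridiagonal $0/1$ matrix (with the conventions $v_0=v_{m+1}=0$), the $k$-th entry of $A(P_m)v$ is $v_{k-1}+v_{k+1}$, while $D_1 v$ contributes $\lambda_i v_1$ only in the first entry. For the interior rows $2\le k\le m-1$, the required equality $S_{m-k+1}(\mu)+S_{m-k-1}(\mu)=\mu\,S_{m-k}(\mu)$ is exactly the recurrence $S_{p+1}+S_{p-1}=x\,S_p$ with $p=m-k\ge 1$, and the last row reduces to $S_1(\mu)=\mu\,S_0(\mu)$, i.e.\ $\mu=\mu$.

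The only row that involves $\lambda_i$ and $D_1$, and hence the one place where the defining property of $\mu_i^{(j)}$ enters, is the first row, which demands $S_{m-2}(\mu)+\lambda_i S_{m-1}(\mu)=\mu\,S_{m-1}(\mu)$. Rearranging and applying the recurrence in the form $S_m(\mu)=\mu\,S_{m-1}(\mu)-S_{m-2}(\mu)$ turns this into $S_m(\mu)-\lambda_i S_{m-1}(\mu)=0$, which holds precisely because $\mu=\mu_i^{(j)}$ is by definition a root of $S_m(x)-\lambda_i S_{m-1}(x)$.

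I do not expect a serious obstacle here: the argument is a routine verification rather than a deep computation. The only point that requires care is the index bookkeeping of the Chebyshev polynomials at the two boundary rows, to ensure that the recurrence is invoked only with admissible indices and that the top row is indeed the single place where the defining equation of $\mu_i^{(j)}$ closes the argument.
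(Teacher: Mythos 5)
Your proposal is correct and takes essentially the same route as the paper: both expand $\tilde{A}$ via Observation \ref{adjGP}, use the mixed-product property of the Kronecker product together with $A(G)\xi_i=\lambda_i\xi_i$ to reduce to an identity in $\mathbb{R}^m$, and close it using the three-term recurrence $S_{p+1}(x)+S_{p-1}(x)=xS_p(x)$ together with the defining relation $S_m(\mu)=\lambda_i S_{m-1}(\mu)$. The only cosmetic differences are that you discard the normalization factor up front and verify the rows individually (first, interior, last), whereas the paper carries the factor $1/s_{m-1}$ along and states the vector identity all at once.
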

\begin{proof}
We fix $i$ and $j$ and write $s_k=S_k(\mu_i^{(j)})$ $(k=0,1,\ldots, m-1)$ for simplicity. By Observation \ref{adjGP} and some basic properties of the Kronecker product, we obtain
	\begin{eqnarray}\label{Aeta}
\tilde{A}\eta_i^{(j)} &=&\frac{1}{s_{m-1}}(A(P_m)\otimes I_n+D_1\otimes A(G))((s_{m-1},s_{m-2},\ldots,s_0)^\T\otimes \xi_i)\nonumber\\
&=&\frac{1}{s_{m-1}}((A(P_m)\begin{bmatrix}
s_{m-1}\\
s_{m-2}\\
\vdots\\
s_0
\end{bmatrix})\otimes \xi_i+\begin{bmatrix}
s_{m-1}\\
0\\
\vdots\\
0
\end{bmatrix}\otimes (\lambda_i \xi_i))\nonumber\\
&= &\frac{1}{s_{m-1}}(A(P_m)\begin{bmatrix}
s_{m-1}\\
s_{m-2}\\
\vdots\\
s_0
\end{bmatrix}+\begin{bmatrix}
\lambda_i s_{m-1}\\
0\\
\vdots\\
0
\end{bmatrix})\otimes \xi_i\nonumber\\
&= &\frac{1}{s_{m-1}}\begin{bmatrix}
s_{m-2}+\lambda_is_{m-1}\\
s_{m-3}+s_{m-1}\\
\vdots\\
s_0+s_2\\
s_1
\end{bmatrix}\otimes \xi_i.
\end{eqnarray}
By Definition \ref{eigmu}, we see that $\lambda_i s_{m-1}=s_m$. Noting that $S_1(x)=xS_0(x)$ and $S_k(x)+S_{k+2}(x)=xS_{k+1}$ for any $k\ge 0$, we obtain
\begin{equation}
\begin{bmatrix}
s_{m-2}+\lambda_is_{m-1}\\
s_{m-3}+s_{m-1}\\
\vdots\\
s_0+s_2\\
s_1
\end{bmatrix}=\begin{bmatrix}
s_{m-2}+s_{m}\\
s_{m-3}+s_{m-1}\\
\vdots\\
s_0+s_2\\
s_1
\end{bmatrix}=\mu_i^{(j)}\begin{bmatrix}
s_{m-1}\\
s_{m-2}\\
\vdots\\
s_1\\
s_0
\end{bmatrix}.
\end{equation}
Now, Eq. \eqref{Aeta} becomes
$$\tilde{A}\eta_i^{(j)}=\frac{1}{s_{m-1}}\mu_{i}^{(j)}\begin{bmatrix}
s_{m-1}\\
s_{m-2}\\
\vdots\\
s_1\\
s_0
\end{bmatrix}\otimes\xi_i=\mu_i^{(j)}\eta_i^{(j)},$$
as desired.
\end{proof}

\section{Resultants of Chebyshev-related polynomials}\label{sectres}
\begin{definition}\normalfont{
	Let $f(x)=a_nx^n+a_{n-1}x^{n-1}+\cdots+a_1x+a_0$ and $g(x)=b_mx^m+b_{m-1}x^{m-1}+\cdots+b_1x+b_0$. The resultant of $f(x)$ and $g(x)$, denoted by $\Res_x(f(x),g(x))$, or simply $\Res(f(x),g(x))$, is defined to be
	$$a_n^mb_m^n\prod_{1\le i\le n,1\le j\le m}(\alpha_i-\beta_j),$$
	where $\alpha_i$'s and $\beta_j$'s are the roots (in complex field $\mathbb{C}$) of $f(x)$ and $g(x)$, respectively.
}
\end{definition}
We list some basic properties of resultants for convenience.
\begin{lemma}\label{basicres}
	Let $f(x)=a_nx^n+\cdots+a_0=a_n\prod_{i=1}^n(x-\alpha_i)$ and $g(x)=b_mx^m+\cdots+b_0=b_m\prod_{j=1}^m(x-\beta_j)$. Then the followings hold:\\
	\textup{(\rmnum{1})} $\Res(f(x),g(x))=a_n^m\prod_{i=1}^{n}g(\alpha_i) =(-1)^{mn}b_m^{n}\prod_{j=1}^m f(\beta_j);$\\
	\textup{(\rmnum{2})} If $m<n$ then $\Res_x(f(x)+tg(x),g(x))=\Res_x(f(x),g(x))$ for any $t\in \mathbb{C}$;\\
	\textup{(\rmnum{3})} $\Res_x(f(tx),g(tx))=t^{mn}\Res_x(f(x),g(x))$ for any $t\in \mathbb{C}\setminus\{0\}.$
\end{lemma}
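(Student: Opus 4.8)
The plan is to obtain all three identities directly from the product definition $\Res(f,g)=a_n^m b_m^n\prod_{1\le i\le n,\,1\le j\le m}(\alpha_i-\beta_j)$, so that each becomes a matter of regrouping factors and keeping track of signs and of the powers of the leading coefficients. No deep input is required beyond this definition.

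For part (\rmnum{1}), I would group the double product according to the roots of $f$. Since $g(\alpha_i)=b_m\prod_{j=1}^m(\alpha_i-\beta_j)$, multiplying over $i$ gives $\prod_{i=1}^n g(\alpha_i)=b_m^n\prod_{i,j}(\alpha_i-\beta_j)$, and inserting the factor $a_n^m$ recovers $\Res(f,g)=a_n^m\prod_{i=1}^n g(\alpha_i)$. For the second expression I would instead group by the roots of $g$: from $f(\beta_j)=a_n\prod_{i=1}^n(\beta_j-\alpha_i)=(-1)^n a_n\prod_{i=1}^n(\alpha_i-\beta_j)$ one gets $\prod_{j=1}^m f(\beta_j)=(-1)^{mn}a_n^m\prod_{i,j}(\alpha_i-\beta_j)$, and multiplying by $(-1)^{mn}b_m^n$ cancels the sign (as $(-1)^{2mn}=1$) and reproduces the definition.

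For part (\rmnum{2}), the crucial observation is that the hypothesis $m<n$ guarantees $\deg(f+tg)=n$ with the same leading coefficient $a_n$ as $f$; hence the expression from part (\rmnum{1}) that evaluates the first polynomial at the roots $\beta_j$ of $g$ applies verbatim to $f+tg$. Since $g(\beta_j)=0$ for every root of $g$, we have $(f+tg)(\beta_j)=f(\beta_j)$, so $\Res(f+tg,g)=(-1)^{mn}b_m^n\prod_{j=1}^m(f+tg)(\beta_j)=(-1)^{mn}b_m^n\prod_{j=1}^m f(\beta_j)=\Res(f,g)$.

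For part (\rmnum{3}), I would substitute $x\mapsto tx$ and read off the new data: $f(tx)$ has leading coefficient $a_n t^n$ and roots $\alpha_i/t$, while $g(tx)$ has leading coefficient $b_m t^m$ and roots $\beta_j/t$. Feeding these into the definition gives $(a_n t^n)^m(b_m t^m)^n\prod_{i,j}(\alpha_i/t-\beta_j/t)$, and since each of the $mn$ differences contributes a factor $t^{-1}$, the total power of $t$ is $nm+mn-mn=mn$, yielding $t^{mn}\Res(f,g)$. All three arguments are routine bookkeeping, so I do not anticipate a genuine obstacle; the only points requiring care are fixing the sign $(-1)^{mn}$ in (\rmnum{1}) and invoking the degree condition $m<n$ in (\rmnum{2}) to ensure the leading coefficient, and hence the applicability of the formula from (\rmnum{1}), is preserved under the perturbation $f\mapsto f+tg$.
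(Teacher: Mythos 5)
Your proof is correct and complete. Note that the paper does not prove Lemma \ref{basicres} at all---it simply lists these facts as standard properties of resultants---so there is no argument of the paper to compare against; your routine verification from the defining product formula $\Res(f,g)=a_n^m b_m^n\prod_{i,j}(\alpha_i-\beta_j)$ is exactly the standard one. The two places you flag as requiring care are indeed the only delicate points: in (i) the sign comes from flipping each of the $mn$ factors $(\beta_j-\alpha_i)$, and in (ii) the hypothesis $m<n$ guarantees that $f+tg$ still has degree $n$, so the formula $\Res(h,g)=(-1)^{mn}b_m^{\deg h}\prod_j h(\beta_j)$ applies to $h=f+tg$ with the same exponent $n$ on $b_m$ and the same sign $(-1)^{mn}$, after which $g(\beta_j)=0$ finishes the argument. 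Part (iii) is likewise handled correctly, including the count $nm+mn-mn=mn$ for the power of $t$.
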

We need the following result due to Dilcher and Stolarsky \cite{dilcher2005}, see \cite{jacobs2011,louboutin2013} for different proofs.
\begin{lemma}[\cite{dilcher2005}]For any integer $m\ge 1$,
$$\Res(U_m(x),U_{m-1}(x))=(-1)^{\frac{m(m-1)}{2}}2^{m(m-1)}.$$
\end{lemma}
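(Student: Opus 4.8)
The plan is to compute the resultant directly from its product formula over the roots of $U_m(x)$, using the explicit trigonometric description of the Chebyshev polynomials. First I would recall that $U_m(x)$ has degree $m$ with leading coefficient $2^m$ (immediate from the recurrence $U_{n+1}=2xU_n-U_{n-1}$ with $U_0=1$, $U_1=2x$) and that its $m$ roots are the distinct numbers $\alpha_k=\cos\frac{k\pi}{m+1}$ for $k=1,\ldots,m$ (distinctness is clear since $\cos$ is injective on $(0,\pi)$). Applying Lemma \ref{basicres}(\rmnum{1}) with $f=U_m$ and $g=U_{m-1}$ then gives
$$\Res(U_m(x),U_{m-1}(x))=(2^m)^{m-1}\prod_{k=1}^m U_{m-1}(\alpha_k)=2^{m(m-1)}\prod_{k=1}^m U_{m-1}(\alpha_k),$$
so the whole computation reduces to evaluating the product of $U_{m-1}$ over the zeros of $U_m$.

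Next I would evaluate each factor trigonometrically. Writing $\theta_k=\frac{k\pi}{m+1}$ so that $\alpha_k=\cos\theta_k$, the defining identity $U_{m-1}(\cos\theta)=\frac{\sin m\theta}{\sin\theta}$ combined with $(m+1)\theta_k=k\pi$ yields $m\theta_k=k\pi-\theta_k$, whence $\sin m\theta_k=\sin(k\pi-\theta_k)=(-1)^{k+1}\sin\theta_k$ and therefore $U_{m-1}(\alpha_k)=(-1)^{k+1}$. Taking the product over $k$ gives
$$\prod_{k=1}^m U_{m-1}(\alpha_k)=\prod_{k=1}^m(-1)^{k+1}=(-1)^{\lfloor m/2\rfloor},$$
since exactly $\lfloor m/2\rfloor$ of the indices $k\in\{1,\ldots,m\}$ are even.

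The only remaining point is a sign-bookkeeping step: I must verify that $(-1)^{\lfloor m/2\rfloor}=(-1)^{m(m-1)/2}$, i.e. that $\lfloor m/2\rfloor\equiv\binom{m}{2}\pmod 2$. This follows immediately by reducing $m$ modulo $4$, since both exponents are periodic with period $4$ in $m$ and agree on the residues $0,1,2,3$ (alternatively, a one-line induction works). Combining this with the two displays yields $\Res(U_m(x),U_{m-1}(x))=(-1)^{m(m-1)/2}2^{m(m-1)}$, exactly as claimed.

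I do not anticipate any genuine obstacle here: the argument is a short, self-contained computation built entirely from the product formula for the resultant and the trigonometric definition of the Chebyshev polynomials. The only places that demand care are the parity identity controlling the overall sign and the (easy) observation that the roots $\alpha_k$ are genuinely distinct, so that Lemma \ref{basicres}(\rmnum{1}) applies with no multiplicity complications.
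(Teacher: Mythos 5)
Your proof is correct, and it is worth noting that the paper itself gives \emph{no} proof of this lemma: it is quoted directly from Dilcher--Stolarsky \cite{dilcher2005}, with \cite{jacobs2011,louboutin2013} cited for alternative proofs. So your argument is not a rephrasing of anything in the paper but a genuinely self-contained derivation, close in spirit to Louboutin's short proof \cite{louboutin2013}. Each step checks out: the reduction via Lemma \ref{basicres}(\rmnum{1}) with $f=U_m$ (degree $m$, leading coefficient $2^m$) correctly gives the factor $(2^m)^{m-1}=2^{m(m-1)}$; the evaluation $U_{m-1}\left(\cos\frac{k\pi}{m+1}\right)=(-1)^{k+1}$ follows exactly as you say from $\sin(k\pi-\theta_k)=(-1)^{k+1}\sin\theta_k$; the count of even indices gives $(-1)^{\lfloor m/2\rfloor}$; and the parity identity $\lfloor m/2\rfloor\equiv m(m-1)/2 \pmod 2$ does hold (both are even precisely when $m\equiv 0,1 \pmod 4$), so the final sign matches. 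Interestingly, your technique is essentially the same one the paper \emph{does} use for its new result, Lemma \ref{newres} (Tao's proof): expand the resultant as a product of one polynomial over the explicitly located trigonometric roots of the other, observe that each factor evaluates to $\pm 1$, and do the sign bookkeeping. What your approach buys is a note that would be fully self-contained at the cost of a few lines; what the paper's citation buys is brevity and proper attribution of a known result.
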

\begin{corollary}\label{res1restated}
	For any integers $m,n\ge 1$ and $i\in\{1,\ldots,n\}$,
	$$\prod_{j=1}^mS_{m-1}(\mu_i^{(j)})=(-1)^{\frac{m(m-1)}{2}}.$$	
\end{corollary}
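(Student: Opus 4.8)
The plan is to recognize the product $\prod_{j=1}^m S_{m-1}(\mu_i^{(j)})$ as a resultant and then evaluate it by reducing to the Dilcher--Stolarsky formula for $\Res(U_m,U_{m-1})$. First I would observe that $f(x):=S_m(x)-\lambda_i S_{m-1}(x)$ is monic of degree $m$ (since $S_m$ is monic of degree $m$ while $\deg S_{m-1}=m-1$) and that its roots are exactly $\mu_i^{(1)},\dots,\mu_i^{(m)}$ by Definition \ref{eigmu}. Applying Lemma \ref{basicres}\textup{(\rmnum{1})} with this $f$ and $g=S_{m-1}$, where the leading coefficient of $f$ is $1$, gives $\Res(f,S_{m-1})=\prod_{j=1}^m S_{m-1}(\mu_i^{(j)})$.

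The next step is to eliminate the dependence on $\lambda_i$. Since $\deg S_{m-1}=m-1<m=\deg S_m$, Lemma \ref{basicres}\textup{(\rmnum{2})} applies with base polynomial $S_m$ and $t=-\lambda_i$, yielding $\Res(S_m-\lambda_i S_{m-1},\,S_{m-1})=\Res(S_m,S_{m-1})$. In particular the product in question equals $\Res(S_m,S_{m-1})$, which is independent of $i$; this is consistent with the right-hand side of the corollary carrying no $i$.

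It then remains to evaluate $\Res(S_m,S_{m-1})$. Using $S_k(x)=U_k(x/2)$ together with the scaling rule Lemma \ref{basicres}\textup{(\rmnum{3})} with $t=1/2$ (and $\deg U_m=m$, $\deg U_{m-1}=m-1$), I obtain $\Res(S_m,S_{m-1})=\Res(U_m(x/2),U_{m-1}(x/2))=(1/2)^{m(m-1)}\Res(U_m,U_{m-1})$. Substituting the Dilcher--Stolarsky value $\Res(U_m,U_{m-1})=(-1)^{m(m-1)/2}2^{m(m-1)}$ cancels the power of $2$ and leaves exactly $(-1)^{m(m-1)/2}$, which is the claimed identity.

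Given the three stated properties of resultants and the Dilcher--Stolarsky formula, the argument is essentially a bookkeeping chain, and I do not expect a genuine obstacle. The only points needing care are the correct assignment of degrees when invoking Lemma \ref{basicres} (in particular verifying $\deg S_{m-1}<\deg S_m$ so that part \textup{(\rmnum{2})} legitimately removes the multiple of $S_{m-1}$), and tracking the exponents in the scaling step so that the factor $(1/2)^{m(m-1)}$ precisely cancels $2^{m(m-1)}$.
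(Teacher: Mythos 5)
Your proposal is correct and follows exactly the same route as the paper's own proof: identify the product as $\Res(S_m-\lambda_i S_{m-1},\,S_{m-1})$ via Lemma \ref{basicres}(\rmnum{1}), drop the $\lambda_i$-term by Lemma \ref{basicres}(\rmnum{2}), rescale to $U_m,U_{m-1}$ by Lemma \ref{basicres}(\rmnum{3}), and finish with the Dilcher--Stolarsky value. There is nothing to add; the bookkeeping of degrees and the cancellation of $2^{m(m-1)}$ are handled correctly.
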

\begin{proof}
	Recall that $S_{m-1}(x)=U_{m-1}(x/2)$ is a monic polynomial and so is $S_{m}(x)-\lambda_iS_{m-1}(x).$ Since $\mu_i^{(j)}$'s are roots of $S_{m}(x)-\lambda_iS_{m-1}(x)$, we obtain, by Lemma \ref{basicres},
		\begin{eqnarray*}\label{Res1}
\prod_{j=1}^mS_{m-1}(\mu_i^{(j)}) &=&\Res(S_{m}(x)-\lambda_iS_{m-1}(x),S_{m-1}(x))\\
&=&\Res(S_{m}(x),S_{m-1}(x))\\
&=&\left(\frac{1}{2}\right)^{m(m-1)}\Res(U_{m}(x),U_{m-1}(x))\\
&=&(-1)^{\frac{m(m-1)}{2}},
	\end{eqnarray*}
	as desired.
\end{proof}
The following result on the resultant of Chebyshev-related polynomials seems to be new. We guessed this equality during this study and asked for a proof on MathOverflow. We appreciate Terry Tao very much for his elegant proof which we include here.
\begin{lemma}[\cite{MO}]\label{newres}For any integer $m\ge 1$ and any complex number $t$,
	$$\Res_x \left( U_m(x)+tU_{m-1}(x),\sum_{k=0}^{m-1}U_k(x) \right) =(-1)^{\frac{m(m-1)}{2}} t^{\left\lfloor\frac{m}{2} \right\rfloor}2^{m(m-1)}.$$
\end{lemma}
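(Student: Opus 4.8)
The plan is to evaluate the resultant through the root-product formula of Lemma \ref{basicres}(\rmnum{1}), applied from the side of $Q(x):=\sum_{k=0}^{m-1}U_k(x)$ rather than $P(x):=U_m(x)+tU_{m-1}(x)$, because the zeros of $Q$ turn out to be completely explicit and $P$ simplifies dramatically on them. Since $\deg P=m$, $\deg Q=m-1$, and the leading coefficient of $Q$ is that of its top summand $U_{m-1}$, namely $2^{m-1}$, Lemma \ref{basicres}(\rmnum{1}) gives
\begin{equation*}
\Res_x(P,Q)=(-1)^{m(m-1)}(2^{m-1})^m\prod_{Q(\beta)=0}P(\beta)=2^{m(m-1)}\prod_{Q(\beta)=0}P(\beta),
\end{equation*}
so the whole problem reduces to locating the zeros of $Q$ and evaluating $P$ there.

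The key structural observation I would establish is that the zeros of $Q$ are exactly certain zeros of $U_m$ together with certain zeros of $U_{m-1}$. Substituting $x=\cos\theta$ and using the elementary identity $\sum_{j=1}^{m}\sin(j\theta)=\sin\tfrac{m\theta}{2}\sin\tfrac{(m+1)\theta}{2}/\sin\tfrac{\theta}{2}$ yields
\begin{equation*}
Q(\cos\theta)=\frac{1}{\sin\theta}\sum_{j=1}^{m}\sin(j\theta)=\frac{\sin\tfrac{m\theta}{2}\,\sin\tfrac{(m+1)\theta}{2}}{\sin\theta\,\sin\tfrac{\theta}{2}}.
\end{equation*}
Thus, for $\theta\in(0,\pi)$ (equivalently $x\in(-1,1)$), a zero of $Q$ arises precisely when $\sin\tfrac{(m+1)\theta}{2}=0$, i.e.\ $\theta=\tfrac{2k\pi}{m+1}$, or when $\sin\tfrac{m\theta}{2}=0$, i.e.\ $\theta=\tfrac{2k\pi}{m}$. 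In the first case $\sin((m+1)\theta)=0$, so $U_m(\cos\theta)=0$ and $\cos\theta$ is an \emph{even-indexed} zero $\cos\tfrac{2k\pi}{m+1}$ of $U_m$; in the second case $U_{m-1}(\cos\theta)=0$ and $\cos\theta$ is an even-indexed zero $\cos\tfrac{2k\pi}{m}$ of $U_{m-1}$. Counting admissible $k$ gives $\lfloor m/2\rfloor$ zeros of the first type and $\lfloor(m-1)/2\rfloor$ of the second; the two families are disjoint (as $\gcd(m,m+1)=1$), so together they account for all $m-1$ (necessarily simple) zeros of $Q$.

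On the first family $U_m$ vanishes, so $P(\beta)=tU_{m-1}(\beta)$, while on the second $U_{m-1}$ vanishes, so $P(\beta)=U_m(\beta)$. A short computation with $U_{m-1}(\cos\theta)=\sin(m\theta)/\sin\theta$ shows $U_{m-1}(\cos\tfrac{2k\pi}{m+1})=-1$ at each of the $\lfloor m/2\rfloor$ points of the first family, and similarly $U_m(\cos\tfrac{2k\pi}{m})=+1$ at each point of the second. Hence $\prod_{Q(\beta)=0}P(\beta)=t^{\lfloor m/2\rfloor}(-1)^{\lfloor m/2\rfloor}$, and substituting into the displayed formula yields $\Res_x(P,Q)=2^{m(m-1)}t^{\lfloor m/2\rfloor}(-1)^{\lfloor m/2\rfloor}$. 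It remains only to match the sign via the parity identity $\lfloor m/2\rfloor\equiv\binom{m}{2}=\tfrac{m(m-1)}{2}\pmod 2$ (verified separately for $m$ even and odd), which gives $(-1)^{\lfloor m/2\rfloor}=(-1)^{m(m-1)/2}$ and completes the proof.

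I expect the main obstacle to be the structural observation itself: recognizing that $\sum_{k=0}^{m-1}U_k$ factors through the product $\sin\tfrac{m\theta}{2}\sin\tfrac{(m+1)\theta}{2}$, so that its zeros are inherited from $U_m$ and $U_{m-1}$ and $P$ collapses to a single Chebyshev factor on each family. Once this is in hand, the evaluations of $U_{m-1}$ and $U_m$ at these special nodes and the parity bookkeeping are routine. A minor point requiring care is confirming that $x=1$, where the trigonometric expression is indeterminate, is not a zero of $Q$; this is immediate since $Q(1)=\sum_{k=0}^{m-1}(k+1)>0$.
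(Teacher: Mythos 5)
Your proof is correct and follows essentially the same route as the paper's: expand the resultant over the zeros of $\sum_{k=0}^{m-1}U_k(x)$ via Lemma \ref{basicres}(\rmnum{1}), locate those zeros explicitly through the trigonometric product formula as the points $\cos\frac{2k\pi}{m+1}$ (zeros of $U_m$) and $\cos\frac{2k\pi}{m}$ (zeros of $U_{m-1}$), and evaluate $U_{m-1}=-1$ and $U_m=+1$ on the respective families. The only difference is cosmetic: you make the final sign bookkeeping $\lfloor m/2\rfloor\equiv\frac{m(m-1)}{2}\pmod 2$ explicit, where the paper leaves it as ``counting up the signs.''
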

\begin{proof}
	Since $U_m(x) + t U_{m-1}(x)$ is of degree $m$ and $\sum_{k=0}^{m-1} U_k(x)$ is of degree $m-1$ with leading coefficient $2^{m-1}$, the resultant factors as
	$$  (-1)^{m(m-1)}2^{m(m-1)} \prod_{j=1}^{m-1} (U_m(\beta_j) + t U_{m-1}(\beta_j))$$
	where $\beta_1,\dots,\beta_{m-1}$ are zeroes of $\sum_{k=0}^{m-1} U_k(x)$.
	
	Fortunately, these zeroes can be located explicitly using the usual trigonometric addition and subtraction identities. Telescoping the trigonometric identity $$\sin k \theta = \frac{\cos\left(k-\frac{1}{2}\right) \theta - \cos\left(k+\frac{1}{2}\right) \theta}{2 \sin \frac{\theta}{2} }$$
	we conclude that
	$$ \sum_{k=0}^{m-1} U_k(\cos \theta) = \frac{1}{\sin\theta} \sum_{k=1}^{m}\sin k\theta=\frac{\cos\frac{\theta}{2}-\cos\left(m+\frac{1}{2}\right)\theta}{2 \sin \theta \sin \frac{\theta}{2}} = \frac{ \sin\frac{m+1}{2} \theta\sin\frac{m}{2} \theta}{2 \cos \frac{\theta}{2} \sin^2 \frac{\theta}{2}}$$
	and so the $m-1 = \lfloor \frac{m}{2} \rfloor + \lfloor \frac{m-1}{2} \rfloor$ zeroes of $\sum_{k=0}^{m-1} U_k(x)$ take the form $\cos\frac{2\pi j}{m+1}$ for $1 \leq j \le \lfloor\frac{m}{2}\rfloor$ and $\cos \frac{2\pi j}{m} $ for $1 \leq j \le \lfloor\frac{m-1}{2}\rfloor$.
	
	Since the first class $\cos \frac{2\pi j}{m+1} $ of zeroes are also zeroes of $U_m(x)$, and the second class $\cos\frac{2\pi j}{m}$ are zeroes of $U_{m-1}(x)$, the resultant therefore simplifies to
	$$  (-1)^{m(m-1)}2^{m(m-1)} t^{\lfloor \frac{m}{2} \rfloor}
	\prod_{1 \leq j \le\lfloor \frac{m}{2}\rfloor} U_{m-1}\left( \cos\frac{2\pi j}{m+1} \right)
	\prod_{1 \leq j \le\lfloor\frac{m-1}{2}\rfloor} U_{m}\left( \cos\frac{2\pi j}{m} \right).$$
	But
	$$ U_{m-1}\left( \cos\frac{2\pi j}{m+1}  \right) =\frac{\sin \frac{2\pi mj}{m+1}}{\sin\frac{2\pi j}{m+1}}  = -1$$
	and similarly
	$$ U_{m}\left( \cos \frac{2\pi j}{m}  \right) = \frac{\sin\frac{2\pi (m+1)j}{m}}{\sin\frac{2\pi j}{m}}  = +1$$
	and the lemma then follows after counting up the signs.
\end{proof}
Using a similar argument as in the proof of Corollary \ref{res1restated}, we obtain the following
\begin{corollary}\label{res2restated}
		For any integers $m,n\ge 1$ and $i\in\{1,\ldots,n\}$,
	$$\prod_{j=1}^m\sum_{k=0}^{m-1}S_k(\mu_i^{(j)})=(-1)^{\frac{m(m-1)}{2}} (-\lambda_i)^{\left\lfloor\frac{m}{2} \right\rfloor}.$$
\end{corollary}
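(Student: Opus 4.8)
The plan is to follow the same template as in the proof of Corollary \ref{res1restated}: first rewrite the product as a resultant, then rescale to pass from the renormalized polynomials $S_k$ to the Chebyshev polynomials $U_k$, and finally invoke the resultant identity of Lemma \ref{newres}. Put $g(x)=\sum_{k=0}^{m-1}S_k(x)$. Since each $S_k(x)=U_k(x/2)$ is monic of degree $k$, its highest-degree summand $S_{m-1}$ makes $g(x)$ a polynomial of degree exactly $m-1$; likewise $S_m(x)-\lambda_iS_{m-1}(x)$ is monic of degree $m$ with roots $\mu_i^{(1)},\ldots,\mu_i^{(m)}$. Hence Lemma \ref{basicres}(\rmnum{1}) gives $\prod_{j=1}^m g(\mu_i^{(j)})=\Res\big(S_m(x)-\lambda_iS_{m-1}(x),\,g(x)\big)$, the leading coefficient $1$ of the first polynomial guaranteeing that no stray constant appears.

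Next I would strip the factor $1/2$ from every argument. Writing $S_k(x)=U_k(x/2)$ throughout and applying Lemma \ref{basicres}(\rmnum{3}) with scaling parameter $1/2$ to the pair $U_m(x)-\lambda_iU_{m-1}(x)$ (degree $m$) and $\sum_{k=0}^{m-1}U_k(x)$ (degree $m-1$) yields
$$\Res\Big(S_m(x)-\lambda_iS_{m-1}(x),\,\sum_{k=0}^{m-1}S_k(x)\Big)=\Big(\tfrac{1}{2}\Big)^{m(m-1)}\Res\Big(U_m(x)-\lambda_iU_{m-1}(x),\,\sum_{k=0}^{m-1}U_k(x)\Big).$$
Finally, taking $t=-\lambda_i$ in Lemma \ref{newres} evaluates the right-hand resultant as $(-1)^{m(m-1)/2}(-\lambda_i)^{\lfloor m/2\rfloor}2^{m(m-1)}$. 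The two powers of $2$ cancel, leaving exactly $(-1)^{\frac{m(m-1)}{2}}(-\lambda_i)^{\lfloor m/2\rfloor}$, which is the claimed value.

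The argument is essentially bookkeeping, so I do not anticipate a genuine obstacle; the real content has already been absorbed into Lemma \ref{newres}. The only points requiring care are verifying that $\sum_{k=0}^{m-1}S_k$ has degree exactly $m-1$ (its leading term being the monic $S_{m-1}$), which is what fixes the exponent $m(m-1)=(\deg f)(\deg g)$ in the scaling step (\rmnum{3}), and that $S_m-\lambda_iS_{m-1}$ is monic, so that Lemma \ref{basicres}(\rmnum{1}) turns the resultant into the bare product. Note that, in contrast to Corollary \ref{res1restated}, there is no need to invoke Lemma \ref{basicres}(\rmnum{2}) to discard the $\lambda_i$-term: here that dependence is precisely what produces the factor $(-\lambda_i)^{\lfloor m/2\rfloor}$, and it is carried all the way through via the substitution $t=-\lambda_i$.
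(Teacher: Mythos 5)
Your proposal is correct and is essentially the paper's own argument: the paper proves this corollary by saying ``using a similar argument as in the proof of Corollary \ref{res1restated}'', which is precisely your chain of converting the product to $\Res\left(S_m(x)-\lambda_iS_{m-1}(x),\sum_{k=0}^{m-1}S_k(x)\right)$ via Lemma \ref{basicres}(\rmnum{1}), rescaling by $t=\tfrac{1}{2}$ via Lemma \ref{basicres}(\rmnum{3}), and evaluating with Lemma \ref{newres} at $t=-\lambda_i$. Your remark that Lemma \ref{basicres}(\rmnum{2}) is deliberately \emph{not} used here—since the $\lambda_i$-dependence is what produces the factor $(-\lambda_i)^{\lfloor m/2\rfloor}$—is exactly the right observation about where this proof diverges from that of Corollary \ref{res1restated}.
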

\section{Proof of Theorem \ref{main}}
\begin{lemma}[\cite{mao2015}]\label{basicW}
	Let $\lambda_i$ be the eigenvalues of $G$ with normalized eigenvector $\xi_i$ for $i=1,2,\ldots,n$. Then
	$$\det W(G)=\pm \prod_{1\le i_1\le i_2\le n}(\lambda_{i_2}-\lambda_{i_1})\prod_{1\le i\le n}(e_n^\T \xi_i).$$
\end{lemma}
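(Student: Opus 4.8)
The plan is to diagonalize the (real symmetric) adjacency matrix $A$ of $G$ and thereby factor the walk matrix as a product of an orthogonal matrix, a diagonal matrix, and a Vandermonde matrix, from which the determinant reads off immediately. Since $A$ is real symmetric, the normalized eigenvectors $\xi_1,\ldots,\xi_n$ may be taken to be orthonormal; setting $Q=[\xi_1,\ldots,\xi_n]$ gives an orthogonal matrix with $A=Q\Lambda Q^\T$, where $\Lambda=\diag(\lambda_1,\ldots,\lambda_n)$.

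First I would expand the all-ones vector in the eigenbasis. Writing $c_i=e_n^\T\xi_i$, we have $e=\sum_{i=1}^n c_i\xi_i$, so that $A^k e=\sum_{i=1}^n c_i\lambda_i^k\xi_i$ for every $k\ge 0$. Reading off the $(j,k)$-entry of $W$, with columns indexed by $k=0,1,\ldots,n-1$, gives $(A^k e)_j=\sum_{i=1}^n Q_{j,i}\,c_i\,\lambda_i^k$, which I recognize as the $(j,k)$-entry of the matrix product $QDV$, where $D=\diag(c_1,\ldots,c_n)$ and $V$ is the Vandermonde matrix with entries $V_{i,k}=\lambda_i^k$. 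Hence $W=QDV$.

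Taking determinants then completes the argument: $\det Q=\pm1$ because $Q$ is orthogonal; $\det D=\prod_{i=1}^n c_i=\prod_{1\le i\le n}(e_n^\T\xi_i)$; and $\det V$ is the standard Vandermonde determinant $\prod_{1\le i_1<i_2\le n}(\lambda_{i_2}-\lambda_{i_1})$. Multiplying the three factors and absorbing $\det Q$ into the leading $\pm$ sign yields exactly the claimed formula.

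I do not expect a serious obstacle here, as the entire proof rests on the single factorization $W=QDV$. The one point deserving a word of care is the possibility of repeated eigenvalues; this is handled automatically, since a repeated eigenvalue makes $V$ (and therefore $W$) singular while simultaneously forcing the Vandermonde product on the right-hand side to vanish, so both sides are $0$ and the identity persists. It is precisely the real symmetry of $A$, which guarantees a real orthonormal eigenbasis, that makes the clean factorization $W=QDV$ available in the first place.
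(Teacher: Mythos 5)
Your proof is correct. The paper gives no proof of this lemma at all---it is imported from \cite{mao2015}---so there is no internal argument to compare against; the factorization $W=QDV$ with $Q$ orthogonal, $D=\diag(e_n^\T\xi_1,\ldots,e_n^\T\xi_n)$ and $V$ the Vandermonde matrix in the eigenvalues is precisely the standard route to this identity, and every step of your derivation (expanding $e$ in the orthonormal eigenbasis, reading off $A^ke=\sum_i c_i\lambda_i^k\xi_i$ entrywise, taking determinants) checks out. Two small remarks. First, the lemma as printed in the paper contains a typo: the product should run over $1\le i_1<i_2\le n$, since with $i_1\le i_2$ the diagonal factors $\lambda_{i_2}-\lambda_{i_1}=0$ would make the right-hand side identically zero; your argument produces the strict-inequality version, which is also the form actually used later in the proof of Theorem~\ref{main}. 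Second, your closing caveat about repeated eigenvalues is unnecessary: the factorization $W=QDV$ holds exactly for any choice of orthonormal eigenbasis, with no degenerate case to treat separately---though, as you observe, both sides do vanish there, so the remark is harmless.
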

\begin{definition}
	$S(x)=\sum_{k=0}^{m-1}S_k(x).$
\end{definition}
The following equality is straightforward.
\begin{lemma}\label{emn}
	$e_{mn}^\T \eta_i^{(j)}=\frac{S(\mu_{i}^{(j)})}{S_{m-1}(\mu_i^{(j)})}e_n^\T \xi_i.$
\end{lemma}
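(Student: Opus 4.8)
The plan is to reduce the claimed identity to the mixed-product property of the Kronecker product. The first observation I would make is that the all-ones vector of dimension $mn$ factors as $e_{mn}=e_m\otimes e_n$, where $e_m$ and $e_n$ are the all-ones vectors of dimensions $m$ and $n$ respectively. With this factorization in hand, the whole statement reduces to routine tensor bookkeeping.

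Next I would substitute the definition of $\eta_i^{(j)}$ from Definition \ref{eigmu} and apply the transpose rule $(A\otimes B)^\T=A^\T\otimes B^\T$ together with the mixed-product rule $(A\otimes B)(C\otimes D)=(AC)\otimes(BD)$. Writing $v=(S_{m-1}(\mu_i^{(j)}),S_{m-2}(\mu_i^{(j)}),\ldots,S_0(\mu_i^{(j)}))^\T$ for the Chebyshev column vector appearing in $\eta_i^{(j)}$, this yields
$$e_{mn}^\T\eta_i^{(j)}=\frac{1}{S_{m-1}(\mu_i^{(j)})}(e_m^\T\otimes e_n^\T)(v\otimes\xi_i)=\frac{1}{S_{m-1}(\mu_i^{(j)})}(e_m^\T v)\otimes(e_n^\T\xi_i).$$
Since both $e_m^\T v$ and $e_n^\T\xi_i$ are scalars, the remaining Kronecker product collapses into ordinary multiplication of numbers.

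Finally I would identify the scalar $e_m^\T v$: summing the entries of $v$ produces $\sum_{k=0}^{m-1}S_k(\mu_i^{(j)})$, which is precisely $S(\mu_i^{(j)})$ by the definition of $S(x)$. Pulling out the normalization factor $1/S_{m-1}(\mu_i^{(j)})$ then delivers the asserted formula. There is no genuine obstacle in this argument; the only points that deserve a moment's care are the factorization $e_{mn}=e_m\otimes e_n$ and the correct order of the two Kronecker-product identities, after which the computation is immediate — consistent with the paper's remark that the equality is straightforward.
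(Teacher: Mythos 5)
Your proof is correct: the factorization $e_{mn}=e_m\otimes e_n$, the mixed-product rule, and the identification $e_m^\T v=S(\mu_i^{(j)})$ together give exactly the claimed identity. The paper offers no proof at all (it calls the equality ``straightforward''), and your computation is precisely the routine Kronecker-product verification the authors had in mind, so there is nothing to add or correct.
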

Let $\tilde{A}=A(G\circ P_m)$. By Lemmas \ref{eigA} and \ref{emn}, we have
	\begin{eqnarray}\label{eA}
&&e_{mn}^\T\tilde{A}^k[\eta_1^{(1)},\ldots,\eta_n^{(1)};\ldots;\eta_1^{(m)},\ldots,\eta_n^{(m)}]\nonumber\\
&=&e_{mn}^\T[\eta_1^{(1)},\ldots,\eta_n^{(1)};\ldots;\eta_1^{(m)},\ldots,\eta_n^{(m)}]\begin{bmatrix}
(\mu_1^{(1)})^k&&&\\
&(\mu_2^{(1)})^k&&\\
&&\ddots&\\
&&&(\mu_n^{(m)})^k
\end{bmatrix}_{(mn)\times (mn)}\nonumber\\
&=&[(\mu_1^{(1)})^k,(\mu_2^{(1)})^k,\ldots,(\mu_n^{(m)})^k]\begin{bmatrix}
\frac{S(\mu_{1}^{(1)})}{S_{m-1}(\mu_1^{(1)})}e_n^\T \xi_1&&&\\
&\frac{S(\mu_{2}^{(1)})}{S_{m-1}(\mu_2^{(1)})}e_n^\T \xi_2&&\\
&&\ddots&\\
&&&\frac{S(\mu_{n}^{(m)})}{S_{m-1}(\mu_n^{(m)})}e_n^\T \xi_n
\end{bmatrix}.
\end{eqnarray}
Let $D$ denote the diagonal matrix in Eq.~\eqref{eA}. Write $E^{(j)}=[\eta_1^{(j)},\ldots,\eta_n^{(j)}]$ and
$$M^{(j)}=\begin{bmatrix}
1&1&\cdots&1\\
\mu_1^{(j)}&\mu_2^{(j)}&\cdots&\mu_n^{(j)}\\
\vdots&\vdots&\vdots&\vdots\\
(\mu_1^{(j)})^{mn-1}&(\mu_2^{(j)})^{mn-1}&\cdots&(\mu_n^{(j)})^{mn-1}
\end{bmatrix}_{(mn)\times n}.
$$
Then, summarizing Eq. \eqref{eA} for $k$ from $0$ to $mn-1$ leads to
\begin{equation}\label{wemd}
(W(G\circ P_m))^\T[E^{(1)},\ldots,E^{(m)}]=[M^{(1)},\ldots,M^{(m)}]D.
\end{equation}

\noindent\textbf{Claim 1}: It holds that
$$\det D=a_0^{\lfloor\frac{m}{2}\rfloor}\left(\prod\limits_{1\le i\le n}e_n^\T\xi_i\right)^m,$$ where $a_0=(-1)^n\det A(G)$,  the constant term of characteristic polynomial of $G$.

 By Corollary \ref{res1restated}, we obtain
 $$\prod_{1\le i\le n}\prod_{1\le j\le m}S_{m-1}(\mu_i^{(j)})=(-1)^\frac{m(m-1)n}{2}.$$
 Recall that $S(x)=\sum_{k=0}^{m-1}S_k(x)$. We can rewrite Corollary \ref{res2restated} as
$$\prod_{j=1}^mS(\mu_i^{(j)})=(-1)^{\frac{m(m-1)}{2}} (-\lambda_i)^{\left\lfloor\frac{m}{2} \right\rfloor}.$$
Noting that $\prod_{i=1}^{n}(-\lambda_i)=a_0$, we obtain
$$\prod_{i=1}^{n}\prod_{j=1}^mS(\mu_i^{(j)})=(-1)^{\frac{m(m-1)n}{2}} a_0^{\lfloor\frac{m}{2} \rfloor}.$$
and hence Claim 1 follows by direct multiplication of all diagonal entries in $D$.

\noindent\textbf{Claim 2}: It holds that
$$\det [M^{(1)},\ldots,M^{(m)}]=\left(\prod_{i=1}^{n}\prod_{1\le j_1<j_2\le m}\left(\mu_i^{(j_2)}-\mu_i^{(j_1)}\right)\right)\left(\prod_{1\le i_1< i_2\le n}(\lambda_{i_2}-\lambda_{i_1})\right)^m.$$

We use co-lexicographical order for Cartesian product $\{1,\ldots,n\}\times \{1,\ldots,m\}$. Precisely, $(i_1,j_1)<(i_2,j_2)$ (in co-lexicographical order) if either $j_1<j_2$,  or $j_1=j_2$ and $i_1<i_2$.

Write $V=[M^{(1)},\ldots,M^{(m)}]$. Using the familiar formula for a Vandermonde matrix, we obtain
 	\begin{eqnarray}\label{detV}
 \det V\nonumber&=&\prod_{(i_1,j_1)<(i_2,j_2)}\left(\mu_{i_2}^{(j_2)}-\mu_{i_1}^{(j_1)}\right)\\
 &=&\left(\prod_{i=1}^{n}\prod_{1\le j_1<j_2\le m}\left(\mu_{i}^{(j_2)}-\mu_{i}^{(j_1)}\right)\right)\left(\prod_{i_1\neq i_2}\prod_{(i_1,j_1)<(i_2,j_2)}\left(\mu_{i_2}^{(j_2)}-\mu_{i_1}^{(j_1)}\right)\right).
 \end{eqnarray}
 The second factor of Eq.~\eqref{detV} can be regrouped as
 	\begin{eqnarray}\label{detV2}
&&\prod_{1\le i_1< i_2\le n}\left(\prod_{(i_1,j_1)<(i_2,j_2)}\left(\mu_{i_2}^{(j_2)}-\mu_{i_1}^{(j_1)}\right)\right)\left(\prod_{(i_2,j_2)<(i_1,j_1)}\left(\mu_{i_1}^{(j_1)}-\mu_{i_2}^{(j_2)}\right)\right)\nonumber\\
 &=&\prod_{1\le i_1< i_2\le n}\left(\prod_{j_2=1}^{m}\prod_{j_1=1}^{m}\left(\mu_{i_2}^{(j_2)}-\mu_{i_1}^{(j_1)}\right)\right)\left(\prod_{(i_2,j_2)<(i_1,j_1)}(-1)\right).
 \end{eqnarray}
 Recall that $S_m(x)-\lambda_iS_{m-1}(x)=\prod_{j=1}^m(x-\mu_i^{(j)})$. We see that
 $$\prod_{j_2=1}^{m}\prod_{j_1=1}^{m}\left(\mu_{i_2}^{(j_2)}-\mu_{i_1}^{(j_1)}\right)=\prod_{j_2=1}^{m}\left(S_m(\mu_{i_2}^{(j_2)})-\lambda_{i_1}S_{m-1}(\mu_{i_2}^{(j_2)})\right)=(\lambda_{i_2}-\lambda_{i_1})^m\prod_{j_2=1}^{m}S_{m-1}(\mu_{i_2}^{(j_2)}).$$

 Note that for $i_1< i_2$, the inequality $(i_2,j_2)<(i_1,j_1)$ holds if and only if $j_2<j_1$. This means, for any fixed $i_1,i_2\in\{1,2,\ldots,n\}$ with $i_1<i_2$, $$\prod_{(i_2,j_2)<(i_1,j_1)}(-1)=(-1)^\frac{m(m-1)}{2}.$$ Finally, by Corollary \ref{res1restated},
 $$\prod_{j_2=1}^{m}S_{m-1}(\mu_{i_2}^{(j_2)})=(-1)^\frac{m(m-1)}{2}.$$
 Now, Eq.~\eqref{detV2} reduces to
 $$\left(\prod_{1\le i_1< i_2\le n}\left(\lambda_{i_2}-\lambda_{i_1}\right)\right)^m$$
 and Claim 2 holds by Eq. \eqref{detV}.

 \noindent\textbf{Claim 3}: It holds that
 $$\det[E^{(1)},\ldots,E^{(m)}]=\pm \prod_{i=1}^{n}\prod_{1\le j_1< j_2\le m}\left(\mu_i^{(j_2)}-\mu_i^{(j_1)}\right).$$

 Let $\tilde{s}_k^{(i,j)}=\frac{S_k(\mu_i^{(j)})}{S_{m-1}(\mu_i^{(j)})}$ and $K_{j_1,j_2}$ be the diagonal  matrix:
 \begin{equation}
K_{j_1,j_2}=\begin{bmatrix}
\tilde{s}_{m-j_1}^{(1,j_2)}&&&\\
&\tilde{s}_{m-j_1}^{(2,j_2)}&&\\
&&\ddots&\\
&&&\tilde{s}_{m-j_1}^{(n,j_2)}
\end{bmatrix}_{n\times n}.
 \end{equation}
 Write $Q=[\xi_1,\xi_2,\ldots,\xi_n]$.  Then it is routine to check the following factorization:
 \begin{equation}\label{efac}
 [E^{(1)},E^{(2)},\ldots,E^{(m)}]=\begin{bmatrix}
Q&&&\\
&Q&&\\
&&\ddots&\\
&&&Q
	\end{bmatrix} \begin{bmatrix}
	K_{1,1}&K_{1,2}&\cdots&K_{1,m}\\
	K_{2,1}&K_{2,2}&\cdots&K_{2,m}\\
	\cdots&\cdots&\cdots&\cdots\\
	K_{m,1}&K_{m,2}&\cdots&K_{m,m}
	\end{bmatrix},
 \end{equation}
 where the first factor is a block diagonal matrix with $m$ identical diagonal blocks. Since each block $K_{j_1,j_2}$ is diagonal, it is not difficult to see that, via an  appropriate permutation matrix,  the block matrix $[K_{j_1,j_2}]_{m\times m}$ is similar to the following block diagonal matrix
 \begin{equation*}
 \begin{bmatrix}
L_{1}&&&\\
&L_2&&\\
&&\ddots&\\
&&&L_n
 \end{bmatrix},
 \end{equation*}
 where the $(j_1,j_2)$-th entry of each $L_i$ is the $(i,i)$-th entry of $K_{j_1,j_2}$.  Written exactly,
 \begin{eqnarray}\label{Lt}
 L_i&=&\begin{bmatrix}
 \tilde{s}_{m-1}^{(i,1)}& \tilde{s}_{m-1}^{(i,2)}&\ldots& \tilde{s}_{m-1}^{(i,m)}\\
  \tilde{s}_{m-2}^{(i,1)}& \tilde{s}_{m-2}^{(i,2)}&\ldots& \tilde{s}_{m-2}^{(i,m)}\\
  \cdots&\cdots&\cdots&\cdots\\
   \tilde{s}_{0}^{(i,1)}& \tilde{s}_{0}^{(i,2)}&\ldots& \tilde{s}_{0}^{(i,m)}\\
 \end{bmatrix}\nonumber\\
 &=&\begin{bmatrix}
S_{m-1}(\mu_i^{(1)})&\ldots&  S_{m-1}(\mu_i^{(m)})\\
S_{m-2}(\mu_i^{(1)})&\ldots&  S_{m-2}(\mu_i^{(m)})\\
 \cdots&\cdots&\cdots\\
S_{0}(\mu_i^{(1)})&\ldots&  S_{0}(\mu_i^{(m)})\\
 \end{bmatrix}\begin{bmatrix}
\frac{1}{S_{m-1}(\mu_i^{(1)})}&&\\
 &\ddots&\\
&& \frac{1}{S_{m-1}(\mu_i^{(m)})}\\
 \end{bmatrix}.
 \end{eqnarray}
 Since $S_k(x)$ is a monic polynomial with degree $k$ for each nonnegative integer $k$, we find that the determinant of the first factor in Eq.~\eqref{Lt} equals
 \begin{equation}\label{detL}
 \det\begin{bmatrix}
 (\mu_i^{(1)})^{m-1}&(\mu_i^{(2)})^{m-1}&\cdots& (\mu_i^{(m)})^{m-1}\\
 \cdots&\cdots&\cdots&\cdots\\
 \mu_i^{(1)}&\mu_i^{(2)}&\cdots& \mu_i^{(m)}\\
1&1& \cdots&1
 \end{bmatrix}=(-1)^{\frac{m(m-1)}{2}}\prod_{1\le j_1< j_2\le m}\left(\mu_i^{(j_2)}-\mu_i^{(j_1)}\right).
 \end{equation}
 As $\det Q=\pm 1$ and $\det [K_{j_1,j_2}]_{m\times m}=\prod_{i=1}^n\det L_i$, Claim 3 follows by Eqs.~\eqref{efac}-\eqref{detL}.

 By Claim 3 and Lemma \ref{distmu}, we see that  $\det[E^{(1)},\ldots,E^{(m)}]\neq 0$. Taking determinants for both sides of Eq.~\eqref{wemd} and using Claims 1-3, we obtain
 \begin{eqnarray*}
\det W(G\circ P_m)&=&\frac{\det[M^{(1)},\ldots,M^{(m)}]\det D}{\det [E^{(1)},\ldots,E^{(m)}]}\\
&=&\pm a_0^{\lfloor\frac{m}{2}\rfloor}\left(\prod_{1\le i_1<i_2\le n}(\lambda_{i_2}-\lambda_{i_1})\right)^m\left(\prod\limits_{1\le i\le n}e_n^\T\xi_i\right)^m\\
&=&\pm a_0^{\lfloor\frac{m}{2}\rfloor}(\det W(G))^m,
 \end{eqnarray*}
where the last equality follows from Lemma \ref{basicW}. This completes the proof of Theorem \ref{main}.

For any even positive integer $n$, let $\mathcal{F}^*_{n}$ be the family of all $n$-vertex graph $G$ such that $ \det W(G)=\pm 2^\frac{n}{2}$ and the constant term of $\phi(G;x)$ is $\pm 1$. Write $$\mathcal{F}^*=\bigcup_{n \textup{~even}}\mathcal{F}^*_{n}.$$
As a special case of the aforementioned theorem of Wang \cite{wang2017JCTB}, each graph in $\mathcal{F}^*$ is DGS (determined by its generalized spectrum).
\begin{lemma}[\cite{mao2022}]\label{constant1}
	If the constant term of $\phi(G;x)$ is $\pm 1$, then so is  $\phi(G\circ P_m;x)$ for each integer $m\ge 2$.
\end{lemma}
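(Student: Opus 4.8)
The plan is to evaluate the constant term of $\phi(G\circ P_m;x)$ directly, via the product decomposition $\phi(G\circ P_m;x)=\prod_{i=1}^n\bigl(S_m(x)-\lambda_i S_{m-1}(x)\bigr)$ recalled earlier (using $\phi(P_m;x)=S_m(x)$, with $\lambda_1,\dots,\lambda_n$ the eigenvalues of $G$). Since $\phi(G\circ P_m;x)$ is monic, its constant term is just its value at $x=0$, namely
$$\phi(G\circ P_m;0)=\prod_{i=1}^n\bigl(S_m(0)-\lambda_i S_{m-1}(0)\bigr).$$
Everything then reduces to the two numbers $S_m(0)$ and $S_{m-1}(0)$ together with the identity $\prod_{i=1}^n\lambda_i=\det A(G)$.

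First I would pin down $S_k(0)$ for every $k$. The cleanest route is the three-term recurrence $S_{k+1}(x)=xS_k(x)-S_{k-1}(x)$, which at $x=0$ collapses to $S_{k+1}(0)=-S_{k-1}(0)$; together with $S_0(0)=1$ and $S_1(0)=0$ this yields $S_k(0)=0$ for odd $k$ and $S_k(0)=(-1)^{k/2}$ for even $k$. (Equivalently one may write $S_k(0)=U_k(0)=\sin\frac{(k+1)\pi}{2}$, obtained by putting $\theta=\pi/2$ in $U_k(\cos\theta)=\sin((k+1)\theta)/\sin\theta$.) The key observation is that the parity of $m$ decides which of $S_m(0),S_{m-1}(0)$ vanishes, so the computation splits into two cases.

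If $m$ is even, then $S_{m-1}(0)=0$ and $S_m(0)=(-1)^{m/2}$, so each factor equals $(-1)^{m/2}$ independently of $\lambda_i$, whence $\phi(G\circ P_m;0)=(-1)^{mn/2}=\pm1$ with no hypothesis on $G$ whatsoever. If $m$ is odd, then $S_m(0)=0$ and $S_{m-1}(0)=(-1)^{(m-1)/2}$, so each factor is $(-1)^{(m+1)/2}\lambda_i$ and
$$\phi(G\circ P_m;0)=(-1)^{(m+1)n/2}\prod_{i=1}^n\lambda_i=(-1)^{(m+1)n/2}\det A(G).$$
Since the constant term of $\phi(G;x)$ equals $a_0=(-1)^n\det A(G)=\pm1$ by hypothesis, we get $\det A(G)=\pm1$, and therefore $\phi(G\circ P_m;0)=\pm1$. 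In both cases the constant term is $\pm1$, proving the lemma. There is no genuine obstacle beyond bookkeeping; the only points one must notice are the parity dichotomy of $S_k(0)$ and the fact that the even-$m$ case already holds unconditionally, the assumption on $a_0$ being needed only when $m$ is odd.
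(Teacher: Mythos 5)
The paper never proves this lemma: it is stated purely as a quotation from \cite{mao2022}, so there is no in-paper proof to compare yours against. Judged on its own, your argument is correct and self-contained, and it relies only on facts already recorded in the paper: Schwenk's factorization $\phi(G\circ P_m;x)=\prod_{i=1}^n\bigl(S_m(x)-\lambda_i S_{m-1}(x)\bigr)$ (the lemma quoted in Section~2, together with $\phi(P_m;x)=S_m(x)$), the recurrence $S_{k+1}(x)=xS_k(x)-S_{k-1}(x)$, and the identity $\phi(G;0)=(-1)^n\det A(G)$. Your evaluation $S_k(0)=0$ for odd $k$ and $S_k(0)=(-1)^{k/2}$ for even $k$ checks out, as do both cases: for even $m$ every factor at $x=0$ equals $(-1)^{m/2}$, so the constant term is $(-1)^{mn/2}=\pm1$ unconditionally, while for odd $m$ the constant term is $(-1)^{(m+1)n/2}\det A(G)$, which is $\pm1$ precisely because $a_0=(-1)^n\det A(G)=\pm1$ by hypothesis. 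Two trivial remarks: monicity is irrelevant to ``constant term equals value at $0$'' (that holds for any polynomial), and your observation that the hypothesis on $a_0$ is needed only for odd $m$ is a nice sharpening of the statement as quoted.
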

As a direct consequence of Theorem \ref{main} and Lemma \ref{constant1}, we obtain the following result which was  conjectured (in a slightly different form) by Mao and Wang \cite[Conjecture 3.1]{mao2022}.
\begin{theorem}\label{dgs}	If $G\in \mathcal{F}^*$ then for any integer $m\ge 2$, the graph $G\circ P_m\in \mathcal{F}^*$ and hence is DGS.
\end{theorem}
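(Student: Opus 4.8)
The plan is to show directly that $G\circ P_m$ satisfies the two defining numerical conditions of $\mathcal{F}^*$, after which the ``hence is DGS'' clause is immediate from the special case of Wang's theorem quoted above. Since the genuine content lies entirely in Theorem \ref{main} and Lemma \ref{constant1}, this final statement is essentially a corollary and should require only bookkeeping rather than new ideas.

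First I would record the order of the new graph. As $G\in\mathcal{F}^*$, it lies in some $\mathcal{F}^*_n$ with $n$ even, so $G\circ P_m$ has $mn$ vertices; this is again even for every $m\ge 2$, which identifies $\mathcal{F}^*_{mn}$ as the correct ambient family, provided the two numerical conditions can be checked. Next I would verify the walk-matrix determinant condition. By hypothesis $\det W(G)=\pm 2^{n/2}$ and the constant term $a_0$ of $\phi(G;x)$ equals $\pm 1$. Substituting into Theorem \ref{main} gives
$$\det W(G\circ P_m)=\pm a_0^{\lfloor m/2\rfloor}(\det W(G))^m=\pm(\pm 1)^{\lfloor m/2\rfloor}(\pm 2^{n/2})^m=\pm 2^{mn/2},$$
which is exactly the required normalization $\det W(G\circ P_m)=\pm 2^{mn/2}$ for an $mn$-vertex graph.

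The constant-term condition is then handled by Lemma \ref{constant1}: since the constant term of $\phi(G;x)$ is $\pm 1$, the same lemma guarantees that the constant term of $\phi(G\circ P_m;x)$ is $\pm 1$ for every $m\ge 2$. Together with the determinant computation, this establishes $G\circ P_m\in\mathcal{F}^*_{mn}\subseteq\mathcal{F}^*$. Membership in $\mathcal{F}^*$ finally yields the DGS conclusion by the quoted special case of Wang's theorem, completing the argument.

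There is no substantive obstacle here, as the hard analysis has already been absorbed into Theorem \ref{main}. The only points deserving a moment's attention are confirming that $mn$ remains even (guaranteed by $n$ even) and observing that the factor $a_0^{\lfloor m/2\rfloor}$ contributes only a sign rather than an additional power of $2$; this is precisely why the hypothesis $a_0=\pm 1$ is built into the definition of $\mathcal{F}^*$, ensuring that the family is closed under the operation $G\mapsto G\circ P_m$.
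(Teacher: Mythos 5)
Your proposal is correct and follows exactly the paper's route: the paper also obtains this theorem as a direct consequence of Theorem \ref{main} (giving $\det W(G\circ P_m)=\pm 2^{mn/2}$ since $a_0=\pm1$) together with Lemma \ref{constant1} (preserving the $\pm1$ constant term), and then invokes Wang's criterion. You have merely written out the routine bookkeeping that the paper leaves implicit, so there is nothing to add or correct.
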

Theorem \ref{dgs} gives a simple method to construct large DGS-graphs from small ones. For example, let $G$ be the left graph in Figure 1. It can be easily checked that $G\in \mathcal{F}^*$. Thus using Theorem \ref{dgs}  iteratively, we see that, for any  positive sequence $m_1,m_2,\ldots,m_k,\ldots,$ with $m_i\ge 2$, all graphs in the family
$$G\circ P_{m_1},(G\circ P_{m_1})\circ P_{m_2},((G\circ P_{m_1})\circ P_{m_2})\circ P_{m_3},\cdots$$
are DGS.

\section*{Acknowledgments}
This work is supported by the	National Natural Science Foundation of China (Grant Nos. 12001006 and 12101379), Natural Science Basic Research Plan in Shannxi Province of China (Grant No.\,2020JQ-696) and the Scientific Research Foundation of Anhui Polytechnic University (Grant No.\,2019YQQ024).

\end{document}